\documentclass[preprint,12pt]{elsarticle}
\biboptions{numbers,sort&compress}
\usepackage[T1]{fontenc}
\usepackage{lmodern}
\usepackage{microtype}
\usepackage{amsmath,amssymb,amsfonts,amsthm}
\usepackage{booktabs,array,longtable}
\usepackage{enumitem}
\usepackage{hyperref}
\hypersetup{hidelinks,
 pdftitle={Torsion growth of rational elliptic curves over Zp-extensions of quadratic fields},
 pdfauthor={Shisong Xu and Haoyang Yuan}}
\journal{Journal of Number Theory}
\theoremstyle{plain}
\newtheorem{theorem}{Theorem}[section]
\newtheorem{proposition}[theorem]{Proposition}
\newtheorem{lemma}[theorem]{Lemma}
\newtheorem{corollary}[theorem]{Corollary}
\theoremstyle{definition}
\newtheorem{example}[theorem]{Example}
\theoremstyle{remark}
\newtheorem{remark}[theorem]{Remark}
\newcommand{\Q}{\mathbb Q}
\newcommand{\Z}{\mathbb Z}
\newcommand{\F}{\mathbb F}
\newcommand{\Gal}{\operatorname{Gal}}
\newcommand{\GL}{\operatorname{GL}}

\newcommand{\tors}{\mathrm{tors}}
\newcommand{\cyc}{\mathrm{cyc}}
\newcommand{\anti}{\mathrm{anti}}
\newcommand{\odd}{\mathrm{odd}}
\newcommand{\M}{\widetilde K_p}

\newcommand{\val}{\operatorname{v}}
\begin{document}
\begin{frontmatter}
\title{Torsion growth of rational elliptic curves over $\Z_p$-extensions of quadratic fields}
\author[nju]{Shisong Xu\corref{cor1}}
\ead{shsxu@smail.nju.edu.cn}
\author[nju]{Haoyang Yuan}
\ead{zgqyhy@163.com}
\cortext[cor1]{Corresponding author.}
\affiliation[nju]{organization={Department of Mathematics, Nanjing University},
 addressline={22 Hankou Road},city={Nanjing},postcode={210093},
 state={Jiangsu},country={People's Republic of China}}
\begin{abstract}
Let $E/\Q$ be an elliptic curve and let $\widetilde K_p$ be the compositum of all $\Z_p$-extensions of a quadratic field $K$. We prove that $E(\widetilde K_p)_{\tors}=E(K)_{\tors}$ for $p\geq5$. For $p=3$ and imaginary quadratic $K\neq\Q(\sqrt{-3})$, torsion on each extension is determined by its intersections with the cyclotomic extension and the $2$-division field. Over $\Q(\sqrt{-3})$, we construct infinitely many non-CM curves with full $3$-torsion in the first anticyclotomic layer and compute the $3$-primary torsion on every slope for eight CM curves. For $p=2$, we bound the odd-primary torsion and exclude all primes greater than $7$. We also give uniform bounds for non-CM primary torsion and correct two assertions in Li's preprint about noncyclotomic $\Z_3$-extensions.
\end{abstract}
\begin{keyword}
elliptic curves \sep torsion \sep quadratic fields \sep $\Z_p$-extensions \sep complex multiplication \sep Galois representations
\end{keyword}
\end{frontmatter}

\section{Introduction}

Let $E/\Q$ be an elliptic curve, $K$ a quadratic field, and $p$ a prime. We study torsion growth over $\M$, the compositum of all $\Z_p$-extensions of $K$, and over the individual extensions it contains. Distinct extensions can share finite layers. Moreover, torsion can be infinite over $\M$ and finite over every individual $\Z_p$-extension, as the CM examples below show. We write $C_n=\Z/n\Z$, with $C_1=0$.

Write $\Q_{\infty,p}$ for the cyclotomic $\Z_p$-extension of $\Q$ and $\Q_{r,p}$ for its layer of degree $p^r$. For real quadratic $K$, Leopoldt's conjecture is known and gives $\M=K\Q_{\infty,p}$. For imaginary quadratic $K$, $\Gal(\M/K)\simeq\Z_p^2$. When $p$ is odd, the eigenspaces of complex conjugation give the cyclotomic and anticyclotomic extensions and the decomposition
\[
 \M=K_{\cyc}K_{\anti}.
\]
We denote their layers of degree $p^r$ over $K$ by $K_{\cyc}^{(r)}$ and $K_{\anti}^{(r)}$, respectively, and use $r=\infty$ for the full extensions. The integral eigenspace decomposition above applies only to odd $p$.

\begin{theorem}\label{thm:main-large}
For every quadratic field $K$, every prime $p\geq5$, and every elliptic curve $E/\Q$,
\[
 E(\M)_{\tors}=E(K)_{\tors}.
\]
The same equality holds over each $\Z_p$-extension of $K$.
\end{theorem}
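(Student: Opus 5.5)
Since the group of torsion points over $\M$ is the union of the torsion groups over its finite subextensions, the statement reduces to the following: if $L/K$ is a finite Galois extension inside $\M$ with $\Gal(L/K)$ a $p$-group, then $E(L)_{\tors}=E(K)_{\tors}$. The statement for individual $\Z_p$-extensions is an immediate consequence. I will argue with the field $K(P)$ for $P\in E(L)$ of prime-power order $\ell^n$. Any Galois extension of $\Q$ lying in $\M$ is abelian over $K$, and $\M/\Q$ is Galois. So $K(P)$ sits inside $K(E[\ell^n])\cap\M$, and $\Gal(K(P)/K)$ is an abelian $p$-group with $p\ge5$. The aim is to show that $K(P)=K$, equivalently that $E(L)$ has no new $\ell$-primary torsion. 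I would proceed by induction on $n$: first show that $E(L)[\ell]=E(K)[\ell]$, then lift to higher levels.

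The first step is the case of $\ell$-torsion. Consider the image $G$ of $\Gal(\overline{\Q}/\Q)$ in $\GL_2(\Z/\ell)$ and the subgroup $H$ attached to $K$, which has index at most $2$. If $L$ contains an $\ell$-torsion point $P$ not defined over $K$, then $\Gal(K(P)/K)$ is a nontrivial abelian $p$-group. If $p\neq\ell$, then $K(E[\ell])/K$ must have a quotient of order $p$ on which complex conjugation, or the nontrivial element of $\Gal(K/\Q)$, acts compatibly with $\M$. In the real case this means the cyclotomic action; in the imaginary case it means action by $\pm1$. I would use the known classification of mod-$\ell$ images for rational elliptic curves: Borel subgroups, normalizers of Cartan subgroups, and surjective images, together with Mazur's and Kenku's isogeny theorems. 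These show that $G$ has an abelian quotient of order $p\ge5$ only when $E$ has a rational $\ell$-isogeny with $p\mid\ell-1$ (character quotients) or when the image lies in the normalizer of a Cartan subgroup. In the Borel case the isogeny character $\chi$ and $\chi^{-1}\omega$ would need a $p$-power order piece that is unramified outside $p$ and of the right eigen-type. The character cutting out the new field is ramified at $\ell$, however, while $\M/K$ is unramified outside $p$. This already forces $\ell=p$.

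For $\ell=p\ge5$, the relevant characters of $\Gal(\Q(\zeta_p)/\Q)$ have order dividing $p-1$, which is prime to $p$, so they cannot produce a $p$-extension. In the Cartan case the abelianization has exponent dividing $p^2-1$ times $2$, which is again prime to $p$. The only remaining $p$-part is the unipotent part of a Borel image: a point $P$ generating a degree-$p$ extension with $\sigma P=P+aQ$. In that case $E$ has a rational $p$-isogeny, and the extension $K(P)/K$ is cut out by a class in $H^1$ with twisted coefficients. For it to lie in $\M$, the isogeny characters must be trivial on $\Gal(\overline{\Q}/K)$. By the Weil pairing, this forces $\omega$ to be trivial on $\Gal(\overline{\Q}/K)$, i.e.\ $\Q(\zeta_p)\subseteq K$, which is impossible for $p\ge5$.

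The higher-level step goes as follows. Once $E(L)[\ell]=E(K)[\ell]$, suppose $P\in E(L)$ has order $\ell^{n}$ and $\ell P\in E(K)$. For $\sigma\in\Gal(L/K)$, the map $\sigma\mapsto\sigma P-P$ is a homomorphism into $E(K)[\ell]$. Its order divides $\ell$, while the group is a $p$-group. It is therefore trivial when $\ell\neq p$. When $\ell=p$, it is trivial unless $E(K)[p]\ne0$, and $p\ge5$ with $[K:\Q]=2$ gives $E(K)[p]=0$ except for $p=5,7$ by the quadratic torsion classification. Those residual cases I would handle by the same Weil-pairing argument: full rationality of $E[p]$ over $L$ would force $\zeta_p\in L$, which is impossible. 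I expect the main obstacle to be the ramification and eigenspace bookkeeping in the imaginary case for $\ell\ne p$, where characters of order $p$ with $p\mid\ell-1$ could a priori come from the anticyclotomic direction. Here I would invoke the fact that $\M/K$ is unramified outside $p$, whereas any new $\ell$-torsion field is ramified only at $\ell$ and at primes of bad reduction, with its $p$-part tame at $\ell$.
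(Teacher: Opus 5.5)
Your overall strategy differs from the paper's: you settle level $\ell$ directly with mod-$\ell$ images and ramification, then lift by the cocycle $\sigma\mapsto\sigma P-P$. The lift is fine for $\ell\neq p$. It breaks down, however, exactly when $\ell=p\in\{5,7\}$ and $E(K)[p]\neq0$, and your proposed Weil-pairing fix does not apply there.

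Since $\zeta_p\notin K$, the group $E(K)[p]$ is cyclic. Because $p^{n-1}P\in E(L)[p]=E(K)[p]$, it equals $\langle p^{n-1}P\rangle$. Hence $\sigma P-P\in\langle P\rangle$, and no second independent $p$-torsion point ever appears over $L$. The homomorphism $\Gal(L/K)\to E(K)[p]\simeq C_p$ is a map from a $p$-group to $C_p$, so nothing forces it to vanish. What you actually face is a $G_\Q$-stable cyclic subgroup $\langle P\rangle$; it is stable because $E(\M)[p^n]$ is cyclic. Its character is $\equiv1\pmod p$ on $G_K$ and has order $p$ there.

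For $p=7$ this is excluded by the isogeny list \eqref{eq:isogeny-list}, since there is no rational cyclic $49$-isogeny. For $p=5$, however, $25$ is on the list. You must then rule out that the order-$5$ part of this character is the conductor-$25$ character cutting out $\Q_{1,5}$. In other words, you must exclude a point of order $25$ over $\Q_{1,5}$, with rational $5$-multiple, on $E$ or on the twist $E^d$.

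This is precisely the nontrivial input the paper imports. Lemma~\ref{lem:cyclic-character} uses the fact that conjugation inverts $\Gal(\M/K_{\cyc})$, so any character kills it; this puts all odd torsion in $K_{\cyc}=K\Q_{\infty,p}$. The paper then splits via the twist decomposition \eqref{eq:twist} and applies \cite{CDKN}, which gives $E(\Q_{\infty,p})_{\tors}=E(\Q)_{\tors}$ for $p\geq5$. Without that result, or a substitute local analysis of inertia at $5$ on cyclic subgroups of $E[25]$ (delicate under additive reduction), your induction does not close.

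A smaller repair is needed at level $\ell\neq p$. Your case list is imprecise. A surjective image has a determinant quotient of order $p$ when $p\mid\ell-1$. You also never treat the Cartan-normalizer case for $\ell\neq p$; a nonsplit Cartan has order $\ell^2-1$, so $p\mid\ell+1$ is possible, for example for CM curves.

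Moreover, ramification alone does not suffice: an everywhere unramified abelian $p$-extension of $K$ need not be trivial when $p\mid h_K$. The clean argument runs as follows.
\begin{itemize}
\item For $\ell\geq5$, $E(\M)[\ell]$ is cyclic by the Weil pairing and \eqref{eq:inertia-bound}. So a new point spans a $G_\Q$-stable line, and only the Borel case occurs.
\item The point's field is then cut out by a Dirichlet character. Its $p$-part is unramified at every $q\neq\ell$, including $q=p$, because inertia there acts through a group whose order involves only $2$, $3$ and $\ell$.
\item If this $p$-part is nontrivial, it must therefore ramify at $\ell$, and it still does after restriction to $G_K$ since it has odd order. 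This contradicts the fact that $\M/K$ is unramified outside $p$.
\end{itemize}
For $\ell=2,3$, simply note that $p\nmid|\GL_2(\F_\ell)|$.
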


Avc{\i} proved the assertion for individual extensions when $p>5$ \cite{Avci}. The descent lemmas used there require the ambient field to be Galois over $\Q$, whereas a general $\Z_p$-extension of $K$ need not satisfy this condition. The compositum is Galois over $\Q$, so these lemmas apply to its finite Galois subfields. Cyclic torsion characters and quadratic twists then give a proof for all $p\geq5$.

For imaginary quadratic $K$ and $p=3$, choose generators $\gamma_c,\gamma_a$ of the cyclotomic and anticyclotomic eigenspaces. A slope $[\alpha:\beta]\in\mathbb P^1(\Z_3)$ defines a surjection $\varphi_{\alpha,\beta}:\Gal(\widetilde K_3/K)\to\Z_3$ by $\gamma_c\mapsto\alpha$ and $\gamma_a\mapsto\beta$, where at least one coordinate is a unit. Let $L_{\alpha,\beta}$ be the fixed field of its kernel.

\begin{theorem}\label{thm:main-slope}
Suppose $K$ is imaginary quadratic and $K\neq\Q(\sqrt{-3})$. For $L=L_{\alpha,\beta}$,
\[
 E(L)_{\tors}
 =E(L\cap K_{\cyc})_{\tors}^{\odd}
   \oplus E(L\cap K(E[2]))[2^\infty].
\]
If $\beta\neq0$, then
\[
 L\cap K_{\cyc}=K_{\cyc}^{(\val_3(\beta))},
\]
and $\beta=0$ gives $L=K_{\cyc}$. Theorem~\ref{thm:two-slope} describes the intersection with $K(E[2])$. If both $\alpha$ and $\beta$ are units, then $E(L)_{\tors}=E(K)_{\tors}$.
\end{theorem}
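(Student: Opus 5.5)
The plan is to work one prime $\ell$ at a time, using that $L/K$ is pro-$3$ and abelian and that $E$ is defined over $\Q$. First the field-theoretic facts. The cyclotomic $\Z_3$-extension is the fixed field of $\gamma_a$. Under $\varphi_{\alpha,\beta}$ the image of $\overline{\langle\gamma_a\rangle}$ is $\beta\Z_3=3^{\val_3(\beta)}\Z_3$. Hence $\Gal(L\cap K_{\cyc}/K)$, which is the quotient of $\Gal(L/K)\simeq\Z_3$ by the image of $\Gal(\M/K_{\cyc})=\overline{\langle\gamma_a\rangle}$, is cyclic of order $3^{\val_3(\beta)}$. This gives $L\cap K_{\cyc}=K_{\cyc}^{(\val_3(\beta))}$. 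For $\beta=0$ the kernel is $\overline{\langle\gamma_a\rangle}$ and $L=K_{\cyc}$. Units $\alpha,\beta$ then give $L\cap K_{\cyc}=K$. The symmetric computation gives $L\cap K_{\anti}=K$ whenever $\alpha$ is a unit.

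Next the primes $\ell\neq 2,3$. Let $P\in E(L)[\ell^\infty]$ and let $F=K(P)\subset L$, a cyclic $3$-power extension of $K$ that is abelian over $K$ and contained in $\M$. The compositum of $F$ with its complex conjugate $\bar F$ lies in a finite layer $M_n$ of $\M$, which is Galois over $\Q$ with group $\Z/3^n\times\Z/3^n\rtimes C_2$. I will decompose the $\ell$-adic torsion $E(M_n)[\ell^\infty]$ under $\Gal(M_n/K)$, which commutes with the action of $\Gal(\bar\Q/\Q)$ through $\rho_{E,\ell}$. On the anticyclotomic part, conjugation by $c$ acts by inversion. A point defined over a nontrivial anticyclotomic layer then gives a character $\chi$ of order $3$ of $\Gal(\bar{K}/K)$ appearing in $E[\ell]$ (or a twist of a subquotient) with $\chi^c=\chi^{-1}$. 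I will combine this with the Weil pairing and the structure of $\rho_{E,\ell}(G_\Q)$ (Borel, or normalizer of Cartan, for the relevant $\ell$) to show this forces $\ell=3$ or $\mu_3\subset K$. That is, $K=\Q(\sqrt{-3})$, which is excluded. So odd non-$3$ torsion over $M_n$ lies over $K_{\cyc}\cap M_n$, hence over $L\cap K_{\cyc}$. For $\ell=3$ I will run the same argument, using that $\Q(E[3])\supset\mu_3$ and that an anticyclotomic cubic character of $K\ne\Q(\sqrt{-3})$ cannot be cut out by the $3$-torsion of a rational curve compatibly with $\det=\omega$. This step, the descent for $\ell=3$ specifically, is where I expect the main obstacle. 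If the direct argument stalls, I will first try to treat the mixed $\Gal(M_n/\Q)$-representations by restricting to the cyclotomic part and applying the descent lemmas quoted for Theorem~\ref{thm:main-large}.

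For $2$-primary torsion, $K(E[2^\infty])\cap L$ is a $3$-extension. Since the kernel of reduction $\GL_2(\Z_2)\to\GL_2(\F_2)$ is pro-$2$, any $3$-part of $K(P)$ for $P\in E[2^\infty]$ comes from $K(E[2])$, whose degree divides $6$. This gives $E(L)[2^\infty]=E(L\cap K(E[2]))[2^\infty]$. Combining the prime-by-prime results gives the displayed direct sum. If both $\alpha$ and $\beta$ are units, then $L\cap K_{\cyc}=K$. Moreover, a cubic subfield of $K(E[2])$ over $K$ is Galois over $\Q$ (it comes from the cubic resolvent with $S_3$ or $C_3$ image). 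Its Galois group over $K$ is then in a $\pm$-eigenspace for $c$, so it lies in $K_{\cyc}^{(1)}$ or $K_{\anti}^{(1)}$. Neither lies in $L$, since $L\cap K_{\cyc}=L\cap K_{\anti}=K$ when both coordinates are units. Hence $E(L)_{\tors}=E(K)_{\tors}$.
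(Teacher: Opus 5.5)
Your computation of $L\cap K_{\cyc}$, your reduction of the $2$-primary part to $K(E[2])$, and your unit-slope argument via conjugation-stable cubic quotients are correct. They agree with Lemma~\ref{lem:axis}, Lemma~\ref{lem:coprime} and Theorem~\ref{thm:two-slope}.

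The gap is in the odd part, which is the substance of the theorem: you never show that $E(L)[\ell^\infty]$ descends to $L\cap K_{\cyc}$.
\begin{itemize}
\item For $\ell\geq5$ you only announce that the Weil pairing and the Borel/Cartan shape of $\rho_{E,\ell}(G_\Q)$ will ``force $\ell=3$ or $\mu_3\subset K$''. No mechanism is given. That target would also yield no contradiction at $\ell=3$.
\item A point on a mixed slope has a character with both a cyclotomic and an anticyclotomic component, not merely an anticyclotomic one, and your sketch does not address this.
\item For $\ell=3$ you explicitly leave the descent open. Working with a character of $G_K$ on $\langle P\rangle$ cannot close it. A character of $3$-power order with values in $(\Z/3^n\Z)^\times$ is trivial modulo $3$, so it cannot separate $P$ from $cP$ modulo $3$.
\end{itemize}

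The missing idea is to obtain a character of $G_\Q$ rather than of $G_K$, by working over the Galois field $\M$ (your $M_n$ would also do).
\begin{itemize}
\item First, $\mu_\ell\not\subset\M$. For $\ell\geq5$ this follows from \eqref{eq:inertia-bound}. For $\ell=3$ it holds because $[K(\mu_3):K]=2$ when $K\neq\Q(\sqrt{-3})$, while $\M/K$ is pro-$3$.
\item The Weil pairing then gives $\dim_{\F_\ell}E(\M)[\ell]\leq1$. Hence $E(\M)[\ell^n]=\langle P\rangle$ is cyclic, and it is $G_\Q$-stable because $\M/\Q$ is Galois.
\item So $G_\Q$ acts on $\langle P\rangle$ through a character $\psi$ with abelian image. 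For $\sigma\in\Gal(\M/K_{\cyc})$ one has $c\sigma c^{-1}=\sigma^{-1}$, hence $\psi(\sigma)=\psi(\sigma)^{-1}$.
\item The image of this pro-$3$ group therefore has exponent dividing $2$, so it is trivial. Thus $P\in E(K_{\cyc})$.
\end{itemize}
Intersecting with $L$ gives $E(L)[\ell^\infty]=E(L\cap K_{\cyc})[\ell^\infty]$ for every odd $\ell$. Here $\ell=3$ is treated exactly like the other primes, and no information about the image of $\rho_{E,\ell}$ is needed. This is the content of Lemma~\ref{lem:cyclic-character} and Proposition~\ref{prop:odd-three}.
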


For the exceptional field and for $p=2$, we obtain the following results.

\begin{theorem}\label{thm:main-small}
\leavevmode
\begin{enumerate}[label=\textup{(\roman*)}]
\item For $K=\Q(\sqrt{-3})$, full $3$-torsion is contained in $\widetilde K_3$ if and only if $K(E[3])$ is $K$ or $K_{\anti}^{(1)}$. An infinite family of non-CM curves realizes the latter case \textup{(}Theorems~\ref{thm:full-three} and~\ref{thm:hesse-family}\textup{)}.
\item For the eight CM curves in Table~\ref{tab:cm-three}, all of $E[3^\infty]$ is contained in $\widetilde K_3$ for $K=\Q(\sqrt{-3})$, and Table~\ref{tab:cm-slopes} gives the exact finite group $E(L_{\alpha,\beta})[3^\infty]$ on every slope.
\item For any quadratic $K$, put $F=\widetilde K_2$. Then
\[
 \begin{split}
 E(F)[3^\infty]&\in\{0,C_3,C_9,C_3^2\},\\
 E(F)[5^\infty]&\in\{0,C_5\},\qquad
 E(F)[7^\infty]\in\{0,C_7\},\\
 E(F)[q^\infty]&=0\quad\text{for every prime }q\geq11.
 \end{split}
\]
The possible combinations of these primary groups remain to be determined.
\item If $E$ is non-CM, then
\[
 E(\widetilde K_2)[2^\infty]\subset E[2^{12}],
 \qquad
 E(\widetilde K_3)[3^\infty]\subset E[3^6].
\]
These bounds reduce the problem to finite division fields.
\end{enumerate}
\end{theorem}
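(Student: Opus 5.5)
The statement bundles four results that the paper proves later, so my plan treats the parts separately. All four rest on one structural fact: $\M$ is Galois over $\Q$, and $\Gal(\M/\Q)\simeq\Gal(\M/K)\rtimes C_2$ with $\Gal(\M/K)$ pro-$p$ abelian. Hence any $G_\Q$-stable subgroup of $E[q^\infty]$ that becomes rational over $\M$ has image of $G_\Q$ that is abelian-by-$C_2$ with pro-$p$ abelian part. Moreover, by class field theory that part is unramified outside $p$ and outside the primes ramified in $K$.

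\textbf{Part (i).} Here $K=\Q(\sqrt{-3})$ contains $\zeta_3$, so by the Weil pairing $H=\Gal(K(E[3])/K)$ lies in $\mathrm{SL}_2(\F_3)$. If $K(E[3])\subset\widetilde K_3$, then $H$ is an abelian $3$-group, hence $H=1$ or $H\simeq C_3$ generated by a transvection. Since $K(E[3])$ is Galois over $\Q$, complex conjugation $c$ acts on $H$ either trivially or by inversion.
\begin{itemize}[label=\textup{--}]
\item Trivial action would make $\Gal(\Q(E[3])/\Q)\simeq C_6$ abelian. Then the image of $c$, which has eigenvalues $\pm1$, would commute with a nontrivial unipotent element. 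A direct matrix check rules this out.
\item So $c$ acts by inversion, and the $C_3$-extension is the first anticyclotomic layer $K_{\anti}^{(1)}$. The converse inclusion is trivial.
\end{itemize}
For the infinite family, I would take the Hesse pencil, or a one-parameter twist of it, with mod $3$ image equal to the Borel-type subgroup $\{\begin{pmatrix}\pm1&*\\0&1\end{pmatrix}\}$ suitably conjugated. I would then verify the anticyclotomic condition by showing that the cubic defining the $x$-coordinates is a Kummer extension $K(\sqrt[3]{u})$ with $u\bar u$ a cube.

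\textbf{Part (ii).} For the eight CM curves with CM by an order in $K$, $K(E[3^\infty])$ is a ray class field of conductor $3^\infty\mathfrak f$. The Grössencharacter has $3$-power conductor on the nose, which gives $E[3^\infty]\subset\widetilde K_3$. I would then compute the $3$-adic character $\psi:\Gal(\widetilde K_3/K)\to\mathcal O_K\otimes\Z_3^\times$ explicitly in terms of $\gamma_c,\gamma_a$. For each slope, $E(L_{\alpha,\beta})[3^\infty]$ is the subgroup fixed by $\psi(\ker\varphi_{\alpha,\beta})$, which is a finite $\mathcal O_K$-module computation. This is routine but case-heavy, and I would tabulate it.

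\textbf{Part (iii).} Let $q$ be odd and $P\in E(F)[q^\infty]$ with $F=\widetilde K_2$. The $G_\Q$-orbit of $\langle P\rangle$ lies in $E(F)$.
\begin{itemize}[label=\textup{--}]
\item If $E(F)\supset E[q]$, then $\zeta_q\in F$. This forces $q$ to be unramified in $F$ away from the discriminant of $K$, so the only possibility is $q=3$ with $K=\Q(\sqrt{-3})$, which gives $C_3^2$ and excludes $E[9]$ similarly.
\item Otherwise $E(F)[q^\infty]$ is cyclic and gives a rational cyclic $q^k$-isogeny. Its isogeny character $\chi$ takes values in a group whose odd part is trivial on $G_F$, so $\chi$ has $2$-power order up to $\{\pm1\}$ twists.
\end{itemize}
Combining with the Mazur--Kenku list of rational cyclic isogeny degrees, I would eliminate $q\ge11$ case by case. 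For the CM degrees $11,19,43,67,163$, $\chi$ or $\chi^{-1}\omega$ has order divisible by an odd prime. The same character argument forbids $C_{25}$ and $C_{49}$, and allows $C_9$ but not $C_{27}$. The main obstacle is precisely this case analysis for the characters attached to the sporadic isogenies ($11,17,37$) and for excluding $C_{27}$. I would handle these by the known explicit formulas for the isogeny characters, using that the character must factor through a pro-$2$ extension of $K$ unramified outside $2\operatorname{disc}(K)$.

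\textbf{Part (iv).} I would use the classification of $2$-adic images of non-CM curves (Rouse--Zureick-Brown) and of $3$-adic images (Rouse--Sutherland--Zureick-Brown). Since $\Gal(\M/\Q)$ is metabelian of the form above, $G_{\M}$ maps onto a subgroup of $\rho_{E,\ell^\infty}(G_\Q)$ containing the closure of its second derived subgroup. From the database one reads that this subgroup always contains $1+\ell^k M_2(\Z_\ell)$ with an explicit uniform $k$. A point fixed by $1+\ell^kM_2(\Z_\ell)$ lies in $E[\ell^k]$, and the resulting bounds are $2^{12}$ and $3^6$.
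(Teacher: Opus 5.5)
You prove the criterion in (i) exactly as the paper does (Theorem~\ref{thm:full-three}). Your containment argument in (ii) is Corollary~\ref{cor:cm-sufficient}, once you add Lemma~\ref{lem:ray-compositum}: the maximal abelian extension of $K$ unramified outside $3$ is exactly $\widetilde K_3$.

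\textbf{Part (iii).} The serious gaps are here.
\begin{itemize}
\item In the full-$3$-torsion branch you only exclude $E[9]$, via $e_3(\Q(\zeta_9)/\Q)=6>2$. The list $\{0,C_3,C_9,C_3^2\}$ also requires excluding $C_3\oplus C_9$, and neither of your branches addresses it. The isogeny list cannot help, since such a $T$ only forces the rational $3$-isogeny $3T$.
\item The paper handles this as follows. If $T=E(F)[9]\simeq C_3\oplus C_9$, then $3T$ is a $G_\Q$-stable line, so the mod-$3$ image is a $2$-subgroup of a Borel subgroup. Hence $D=\Q(E[3])\subset F$ is Galois of degree at most $4$. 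Coprime control places $T$ over $D$, and \cite{Najman,ChouQuartic} rule this out.
\item Your case split is also wrong: $\zeta_3\in F$ does not force $K=\Q(\sqrt{-3})$. For $K=\Q(\sqrt{-6})$ one has $\sqrt{-3}=\sqrt{-6}/\sqrt2$ with $\sqrt2\in\Q_{\infty,2}$. The curve $y^2+y=x^3$ then has full $3$-torsion over $F$.
\item In the cyclic branch, a case-by-case pass through the isogeny list cannot handle the infinite families. You omit $q=13$ entirely. A cyclic quartic point field is possible there (Remark~\ref{rem:Li-two}), so the $2$-power order of $\chi$ is no obstruction. Your CM-degree argument also says nothing about the $X_0(25)$ family, so $C_{25}$ is not excluded.
\item The missing idea is Lemma~\ref{lem:two-character}. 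By \eqref{eq:inertia-bound}, $\chi$ has order at most $2$ on inertia at each odd prime. A quadratic twist therefore gives it $2$-power conductor, and Kronecker--Weber puts it inside $\Q(\mu_{2^\infty})$. A further twist by the character of $\Q(i)$ moves the point to some $E^d$ over $\Q_{\infty,2}$. Then \cite{CDKN} excludes $q\geq11$, $C_{25}$ and $C_{49}$ uniformly.
\item The cyclic $3$-part needs no isogeny characters. The $2$-Sylow subgroup of $(\Z/3^n\Z)^\times$ is $\{\pm1\}$, so $P$ is defined over a quadratic field, and \cite{Najman} bounds its order by $9$.
\end{itemize}

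\textbf{Part (iv).} This fails as written.
\begin{itemize}
\item A derived subgroup lies in $\mathrm{SL}_2(\Z_\ell)$, so it cannot contain $1+\ell^kM_2(\Z_\ell)$.
\item More importantly, passing to the second derived subgroup of $\rho(G_\Q)$ costs an extra commutator step. Nothing in your sketch produces the exponents $2^{12}$ and $3^6$; they are simply asserted.
\item The paper uses only that $\M/K$ is abelian, so $G_{\M}$ contains the commutator subgroup of $G_K$. Since $[G_\Q:G_K]\le2$, $\rho(G_K)$ contains $U=I+te_{12}$ and $V=I+te_{21}$. For $\ell=2$, $t=64$: these are squares of elements of $I+32M_2(\Z_2)$, which the image contains by \cite{RZB}. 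For $\ell=3$, $t=27$: the pro-$3$ group of \cite[Corollary~1.1]{RSZB} lies in $\rho(G_K)$.
\item A single commutator then satisfies $UVU^{-1}V^{-1}-I=t^2\cdot(\text{invertible})$ (Lemma~\ref{lem:commutator}), so its fixed points lie in $E[t^2]$.
\item At $\ell=3$ that congruence input is used only for curves with a rational $3$-isogeny, so the paper first reduces to this case. If $E(\widetilde K_3)[3]$ has rank at most one, $3$-primary torsion is cyclic and bounded by $3^3$ via the isogeny list. In the rank-two case, the mod-$3$ image of $G_K$ is a $3$-group whose fixed line is $G_\Q$-stable.
\end{itemize}

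\textbf{The family in (i) and the table in (ii).}
\begin{itemize}
\item Your Kummer test for the family is inverted. For $K(\sqrt[3]{u})$ Galois over $\Q$, the condition $u\bar u\in K^{\times3}$ is exactly the case where the extension is abelian over $\Q$. For example, $u=\zeta_3$ gives $\Q(\zeta_9)=K_{\cyc}^{(1)}$, which you have just excluded.
\item The field $K_{\anti}^{(1)}=K(\sqrt[3]{3})$ requires $u\in3^{\pm1}K^{\times3}$.
\item The untwisted Hesse pencil has $K(E[3])=K$, so it does not realize the anticyclotomic case. You still need an explicit family with explicit $3$-torsion generators, as in Theorem~\ref{thm:hesse-family}.
\item The table in (ii) is asserted rather than computed. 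It rests on identifying $\Gal(\widetilde K_3/K)$ with $1+3\Z_3[\sqrt{-3}]$ through the CM image. It also needs the index-$3$ lattice for type B and the nontrivial $\mu_3$-valued twist for type C.
\end{itemize}
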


The non-CM family in Theorem~\ref{thm:main-small}(i) shows that full $3$-torsion in a finite anticyclotomic layer does not force CM. A second family has full $4$-torsion over $\Q(i)$, with infinitely many non-CM $j$-invariants. These examples also show why good reduction cannot be deduced from the unramifiedness of a single finite torsion module.

\paragraph{Relation to Li's preprint}
Throughout, we cite the first arXiv version of Li's preprint \cite{Li}. That version also proves the $p\geq5$ compositum result and notes the Galois hypothesis in Avc{\i}'s argument. At $p=3$, however, its Lemma~4.2(2) and Proposition~4.8 rely on an incorrect assertion about intersections of distinct $\Z_3$-extensions. Section~\ref{subsec:Li} gives counterexamples to both statements and corrects the resulting claim in Theorem~1.1(ii). At $p=2$, Theorem~\ref{thm:two-odd-large} excludes $17$, left open in \cite{Li}, and supplies the missing argument for $13$; see Remark~\ref{rem:Li-two}.

\paragraph{Chronology}
The earlier manuscript, containing the $p\geq5$ theorem and the initial $p=3$ slope analysis, was submitted to the \emph{Journal of Number Theory} on 22 June 2026; receipt was acknowledged on 23 June under manuscript number \texttt{JNTH-D-26-00825}. Li's version~1 was posted on 15 July 2026. The additional small-prime results and the comparisons with \cite{Li} were developed for the present revision, dated 9 September 2026.

\section{Galois preliminaries and torsion characters}\label{sec:general}

Fix an algebraic closure of $\Q$ containing all fields under consideration, and write $G_F$ for the absolute Galois group of $F$. We use the theory of quadratic $\Z_p$-extensions in \cite{Iwasawa,Washington,Brink} and Brumer's theorem for real quadratic fields \cite{Brumer}. Since each element of $G_\Q$ permutes the $\Z_p$-extensions of $K$, the compositum $\M$ is Galois over $\Q$. It is unramified outside $p$ over $K$. Hence, for every rational prime $\ell\neq p$ and every finite subfield $N$ of $\M$,
\begin{equation}\label{eq:inertia-bound}
 e_\ell(N/\Q)\leq2.
\end{equation}
For imaginary quadratic $K$ and odd $p$, conjugation acts by $+1$ and $-1$ on the two rank-one eigenspaces. Every point of $E(\M)_{\tors}$ is defined over a finite Galois subfield of the form
\[
 K_{\cyc}^{(r)}K_{\anti}^{(s)}.
\]
This supplies finite Galois extensions of $\Q$ on which to apply descent.

We use Mazur's torsion theorem and the rational cyclic-isogeny list \cite{Mazur77,Mazur,Kenku}: a rational cyclic isogeny has degree
\begin{equation}\label{eq:isogeny-list}
 n\leq19\quad\text{or}\quad n\in\{21,25,27,37,43,67,163\}.
\end{equation}
We also use the classifications of torsion on rational elliptic curves over quadratic and cubic fields \cite{Najman}, and over quartic Galois fields \cite{ChouQuartic}.

\begin{lemma}[Coprime primary control]\label{lem:coprime}
Let $F/K$ be a Galois pro-$p$ extension and let $q\neq p$ be prime. Then
\[
 E(F)[q^\infty]=E(F\cap K(E[q]))[q^\infty].
\]
\end{lemma}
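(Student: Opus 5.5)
One inclusion is immediate: $F\cap K(E[q])\subseteq F$ gives $E(F\cap K(E[q]))[q^\infty]\subseteq E(F)[q^\infty]$. For the reverse inclusion, put $F_0=F\cap K(E[q])$. Since $F/K$ is Galois, $F_0/K$ is Galois as well, and it is a subextension of the pro-$p$ extension $F/K$. So $\Gal(F_0/K)$ is a finite $p$-group.

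The plan is to show that every $P\in E(F)[q^\infty]$ is already fixed by $G_{F_0}$. We argue by induction on the order $q^n$ of $P$.

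\textbf{Base case $n=1$.} Here $P\in E(F)\cap E[q]$, so $P$ is defined over $F\cap K(E[q])=F_0$, simply because $K(P)\subseteq K(E[q])$ and $K(P)\subseteq F$.

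\textbf{Inductive step.} Let $P$ have order $q^{n}$ with $n\ge2$, and suppose $Q=qP\in E(F_0)$. Choose a finite Galois subextension $N/F_0$ inside $F$ with $P\in E(N)$. Then $\Gal(N/F_0)$ is a finite $p$-group, being a subquotient of $\Gal(F/K)$. Define
\[
c(\sigma)=\sigma P-P\in E[q] \quad\text{for }\sigma\in\Gal(N/F_0).
\]
This lands in $E[q]$ because $q(\sigma P-P)=\sigma Q-Q=0$. Each $c(\sigma)$ is defined over $F$, so by the base case it is fixed by $\Gal(N/F_0)$. Hence $c$ is a homomorphism from $\Gal(N/F_0)$ to $E[q]\cap E(F_0)$: indeed $c(\sigma\tau)=\sigma c(\tau)+c(\sigma)=c(\tau)+c(\sigma)$. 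A homomorphism from a $p$-group to an elementary abelian $q$-group with $q\neq p$ is trivial. Therefore $\sigma P=P$ for all $\sigma$, and $P\in E(F_0)$. This completes the induction.

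Everything in the argument is elementary. The only point needing care is the coprimality step, and specifically verifying that $c$ really is a homomorphism rather than merely a crossed homomorphism. That requires $c(\sigma)$ to be fixed by the Galois action, which we know because $c(\sigma)$ is a $q$-torsion point lying in $F$, hence in $F_0$, by the base case.

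As an alternative, one could use that $H^1$ of a $p$-group with coefficients in a $q$-group vanishes. However, the direct induction avoids needing to control $E[q^n]\cap E(F_0)$ at higher levels.
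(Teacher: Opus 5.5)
Your proof is correct, but it takes a different route from the paper's.

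The paper fixes $D=K(E[q])$ and notes two facts. First, $\Gal(D(E[q^n])/D)$ lies in the kernel of $\GL_2(\Z/q^n\Z)\to\GL_2(\F_q)$, which is a $q$-group. Second, $DF/D$ is pro-$p$. These two Galois extensions are therefore linearly disjoint over $D$. This yields the field-theoretic statement $F\cap K(E[q^n])\subseteq D$ for every $n$, and the statement about points follows immediately.

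You instead climb one level at a time inside $F$. The map $\sigma\mapsto\sigma P-P$ on $\Gal(N/F_0)$ takes values in $E[q]\cap E(F)$. Your base case shows these values are fixed by $\Gal(N/F_0)$, so the crossed homomorphism is an honest homomorphism, and coprimality kills it. This is the hands-on form of the vanishing of $H^1$ of a $p$-group with $q$-primary coefficients.

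What your route buys is that it needs no knowledge of the structure of $\GL_2(\Z/q^n\Z)$. What it costs is an induction, and it controls only the $F$-rational torsion points. It does not show the stronger equality of fields $F\cap K(E[q^n])=F\cap K(E[q])$ that the paper's disjointness argument gives.

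One small slip: $F_0/K$ is Galois because both $F$ and $K(E[q])$ are Galois over $K$, not merely because $F$ is. You never actually use this. What you need is that $F/F_0$ is Galois with pro-$p$ group, and that holds simply because $F_0$ is an intermediate field of $F/K$.
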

\begin{proof}
Put $D=K(E[q])$. The extension $D(E[q^n])/D$ has $q$-power degree, since its Galois group lies in the kernel of reduction from $\GL_2(\Z/q^n\Z)$ to $\GL_2(\F_q)$. The extension $DF/D$ is pro-$p$. Their intersection is $D$, so every $q^n$-torsion point defined over $F$ is defined over $D$. Its coordinates therefore lie in $F\cap D$.
\end{proof}

\begin{lemma}[Cyclic characters]\label{lem:cyclic-character}
Let $p$ be odd, and let $q$ be an odd prime such that $E(\M)[q]\neq E[q]$. Then
\[
 E(\M)[q^\infty]=E(K_{\cyc})[q^\infty].
\]
For every subfield $L$ of $\M$ containing $K$, the corresponding group over $L$ is defined over $L\cap K_{\cyc}$.
\end{lemma}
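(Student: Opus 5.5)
The plan is to combine Lemma~\ref{lem:coprime} with the eigenspace structure of complex conjugation. We may assume $q\neq p$. For $q=p$ the hypothesis $E(\M)[q]\neq E[q]$ still lets us extract a character, but then we need a separate argument; see below.

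\textbf{Step 1: a Galois-stable line.} Put $V=E(\M)[q]$. Since $\M/\Q$ is Galois and $E$ is defined over $\Q$, $V$ is a $G_\Q$-stable subspace of $E[q]$. If $V=0$ there is nothing to prove. Otherwise, by hypothesis $V$ is a line, so $G_\Q$ acts on $V$ through a character $\chi:G_\Q\to\F_q^\times$. This character is trivial on $G_{\M}$, so it factors through $\Gal(\M/\Q)$. The abelianization of $\Gal(\M/\Q)$ with values in $\F_q^\times$ is what we must understand.

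\textbf{Step 2: the character cannot see the anticyclotomic part.} Write $\Gal(\M/\Q)=\Gal(\M/K)\rtimes\langle c\rangle$, with $\Gal(\M/K)\simeq\Z_p^2$ (or $\Z_p$ in the real case). Here $c$ acts by $+1$ on $\gamma_c$ and by $-1$ on $\gamma_a$. For a character $\chi$ of $\Gal(\M/\Q)$ we have $\chi(c\gamma_a c^{-1})=\chi(\gamma_a)$, which forces $\chi(\gamma_a)^2=1$. Since $\chi(\gamma_a)$ has $p$-power order (being the image of a pro-$p$ element) and $p$ is odd, $\chi(\gamma_a)=1$. Hence $\chi$ factors through $\Gal(K_{\cyc}/\Q)$, and $V\subset E(K_{\cyc})$.

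\textbf{Step 3: higher $q$-power torsion.} Let $P\in E(\M)[q^n]$ have exact order $q^n$. The module $W=E(\M)[q^\infty]$ is $G_\Q$-stable. Its $q$-torsion is the line $V$, so $W$ is cyclic, say $W\simeq C_{q^m}$. Then $G_\Q$ acts on $W$ by a character $\psi:\Gal(\M/\Q)\to(\Z/q^m)^\times$. The same computation gives $\psi(\gamma_a)^2=1$. When $q\neq p$, the element $\psi(\gamma_a)$ again has $p$-power order, so it is trivial. When $q=p$, the kernel of $(\Z/p^m)^\times\to\F_p^\times$ is a $p$-group in which $-1$ does not lie, and the only element of order dividing $2$ there is $1$. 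In either case $\psi(\gamma_a)=1$. This handles the case $q=p$ with no appeal to Lemma~\ref{lem:coprime}. Therefore $W\subset E(K_{\cyc})$, giving $E(\M)[q^\infty]=E(K_{\cyc})[q^\infty]$.

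\textbf{Step 4: intermediate fields.} For $K\subset L\subset\M$, every point of $E(L)[q^\infty]$ lies in $E(\M)[q^\infty]\subset E(K_{\cyc})$. Its coordinates therefore lie in $L\cap K_{\cyc}$.

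\textbf{Main obstacle.} The main subtlety is Step~2 in the real quadratic case, where $\M=K_{\cyc}$ makes the statement trivial. The other subtle point is making sure "cyclic" is correctly deduced from $V$ being a line, which holds because a finite $\Z_q$-module with one-dimensional $q$-torsion is cyclic.
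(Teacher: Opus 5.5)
Your proposal is correct and is essentially the paper's argument. Since $E(\M)[q]$ has rank at most one, each $E(\M)[q^n]$ is cyclic and $G_\Q$-stable, so Galois acts through a character with abelian image. Conjugation inverts $\Gal(\M/K_{\cyc})$, so the image of this pro-$p$ group has exponent dividing $2$, and it is trivial because $p$ is odd.

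Two cosmetic points. First, splitting into $q=p$ and $q\neq p$ is unnecessary, and you never actually use Lemma~\ref{lem:coprime} as announced: $\psi(\gamma_a)$ has $p$-power order in both cases. Second, you assert $W\simeq C_{q^m}$ without justification, although a priori $W$ could be $\Q_q/\Z_q$; run the argument on each finite $E(\M)[q^n]$, as the paper does.
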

\begin{proof}
Let $P\in E(\M)$ have order $q^n$. Since the $q$-rank is at most one, $E(\M)[q^n]=\langle P\rangle$. This subgroup is $G_\Q$-stable because $\M/\Q$ is Galois. The action on it is given by a character with values in $(\Z/q^n\Z)^\times$.

For imaginary quadratic $K$, write $A=\Gal(\M/K_{\cyc})$. Conjugation inverts $A$, whereas any character has abelian image. Therefore the image of $A$ under this character has exponent dividing $2$. Since $A$ is pro-$p$ and $p$ is odd, that image is trivial. Thus $P\in E(K_{\cyc})$. For real quadratic $K$, we have $\M=K_{\cyc}$. The assertion over $L$ follows by intersecting the field of definition of each point with $L$.
\end{proof}

For $K=\Q(\sqrt d)$, $p$ odd, and $B=\Q_{r,p}$, where $0\leq r\leq\infty$, the usual quadratic-twist decomposition gives, for every odd $n$,
\begin{equation}\label{eq:twist}
 E(KB)[n]\simeq E(B)[n]\oplus E^d(B)[n].
\end{equation}
Indeed, the idempotents $(1\pm\sigma)/2$ are defined on odd torsion. By Chou--Daniels--Krijan--Najman \cite{CDKN}, torsion over $\Q_{\infty,p}$ equals rational torsion for $p\geq5$; for $p=2$ the possible groups form Mazur's list; for $p=3$ that list is augmented by $C_{21}$ and $C_{27}$.

\begin{proof}[Proof of Theorem~\ref{thm:main-large}]
For $q\geq5$, full $q$-torsion over $\M$ would imply $\mu_q\subset\M$. If $q\neq p$, this contradicts \eqref{eq:inertia-bound}, since $e_q(\Q(\zeta_q)/\Q)=q-1>2$. If $q=p$, the nontrivial prime-to-$p$ degree $[K(\zeta_p):K]$ contradicts the pro-$p$ condition. Thus Lemma~\ref{lem:cyclic-character} applies. It also applies to $q=3$ unless $K=\Q(\sqrt{-3})$, since an odd-degree extension of $K$ cannot acquire $\mu_3$.

For $q=2$, and also for $q=3$ in the exceptional quadratic field, the group $\GL_2(\F_q)$ has order divisible only by $2$ and $3$. The field $K(E[q])$ therefore intersects $\M$ only in $K$. Lemma~\ref{lem:coprime} gives no growth for these parts. All other odd torsion is cyclotomic by Lemma~\ref{lem:cyclic-character}, and \eqref{eq:twist}, applied to $E$ and $E^d$, together with \cite{CDKN}, shows that it is already defined over $K$.
\end{proof}

\begin{lemma}[A commutator bound]\label{lem:commutator}
Let $E$ be defined over a number field $K$, and let $A/K$ be an abelian extension. Suppose the image of $G_K$ on $T_q(E)$ contains
\[
 U=\begin{pmatrix}1&t\\0&1\end{pmatrix},\qquad
 V=\begin{pmatrix}1&0\\t&1\end{pmatrix},\quad t=q^a.
\]
Then $E(A)[q^\infty]\subset E[q^{2a}]$.
\end{lemma}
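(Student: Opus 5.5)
The plan is to use that $G_A$ is normal in $G_K$ with abelian quotient, so it contains the commutator subgroup $[G_K,G_K]$. Write $\rho:G_K\to\GL_2(\Z_q)$ for the action on $T_q(E)$ and $G=\rho(G_K)$. Then $\rho(G_A)\supseteq[G,G]$. Any point of $E(A)[q^\infty]$ is therefore fixed by every commutator of elements of $G$, acting through $E[q^\infty]\simeq(\Q_q/\Z_q)^2$. So it suffices to exhibit commutators $C_1,C_2\in[G,G]$ such that the joint fixed subgroup of $C_1-1$ and $C_2-1$ on $(\Q_q/\Z_q)^2$ lies in $E[q^{2a}]$.

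For the commutators, I will take $C=UVU^{-1}V^{-1}$ and one more, for instance $C'=VUV^{-1}U^{-1}$ or a commutator involving $U^2$. A direct computation gives
\[
 UVU^{-1}V^{-1}=\begin{pmatrix}1+t^2+t^4 & -t^3\\ t^3 & 1-t^2\end{pmatrix}
\]
up to checking signs. Hence $C-1$ is congruent modulo $t^3$ to $\mathrm{diag}(t^2,-t^2)$. Its determinant is a unit times $t^4$, which is too weak on its own. The better route is to use an entry-wise argument with several commutators.

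The main step is as follows. If $x=(x_1,x_2)\in(\Q_q/\Z_q)^2$ is killed by $C-1$, then
\[
 (t^2+t^4)x_1-t^3x_2=0 \quad\text{and}\quad t^3x_1-t^2x_2=0.
\]
Substituting $t^2x_2=t^3x_1$ into the first equation gives $t^2x_1+t^4x_1-t^4x_1=t^2x_1=0$. Then $t^2x_2=t^3x_1=0$ as well. So $x\in E[t^2]=E[q^{2a}]$, and a single commutator already suffices.

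The only obstacle is the bookkeeping: computing the commutator exactly, including the entries I guessed above, and redoing the elimination with the correct entries. The elimination should still yield $t^2x_1=t^2x_2=0$, because the matrix $C-1$ has the shape $t^2(\mathrm{diag}(1,-1)+O(t))$. A matrix of this shape annihilates exactly $E[t^2]$ on divisible modules. More cleanly, $C-1=t^2M$ with $M\equiv\mathrm{diag}(1,-1)\pmod q$ invertible, so the kernel of $C-1$ on $(\Q_q/\Z_q)^2$ is the kernel of $t^2$. This gives $E(A)[q^\infty]\subset E[q^{2a}]$.
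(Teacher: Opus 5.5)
Your proposal is correct and is essentially the paper's proof. Your commutator is exactly right, $UVU^{-1}V^{-1}-I=t^2\begin{pmatrix}1+t^2&-t\\t&-1\end{pmatrix}$, and the paper likewise uses only this single commutator, concluding from the invertibility of the second factor (determinant $-1$) that the fixed points are killed by $t^2$. One small point: your congruence $M\equiv\mathrm{diag}(1,-1)\pmod q$ needs $a\geq1$, but $\det M=-1$, or your explicit elimination, gives the conclusion for every $a$.
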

\begin{proof}
Commutators of lifts of $U,V$ fix $E(A)$. Direct multiplication gives
\[
 UVU^{-1}V^{-1}-I
 =t^2\begin{pmatrix}1+t^2&-t\\t&-1\end{pmatrix}.
\]
The matrix multiplying $t^2$ has determinant $-1$ and is invertible over $\Z_q$. A fixed point in $(\Q_q/\Z_q)^2$ is therefore killed by $t^2$.
\end{proof}

\section{Torsion on \texorpdfstring{$\Z_3$}{Z3}-extensions}\label{sec:slopes}

Throughout this section, $K$ is imaginary quadratic and $M=\widetilde K_3$. With the notation of the introduction, $\Gal(M/L_{\alpha,\beta})$ is generated by $\beta\gamma_c-\alpha\gamma_a$. We abbreviate $L=L_{\alpha,\beta}$.

\begin{lemma}[Axis intersections]\label{lem:axis}
For $\beta\neq0$ and $\alpha\neq0$, respectively,
\[
 L\cap K_{\cyc}=K_{\cyc}^{(\val_3(\beta))},\qquad
 L\cap K_{\anti}=K_{\anti}^{(\val_3(\alpha))}.
\]
If $\beta=0$ then $L=K_{\cyc}$; if $\alpha=0$ then $L=K_{\anti}$.
\end{lemma}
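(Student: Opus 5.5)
This is pure Galois theory of $\Z_3^2$; the plan is to translate every field in the statement into a closed subgroup of $G=\Gal(M/K)$. Identify $G\simeq\Z_3^2$ through the basis $\gamma_c,\gamma_a$, where $\gamma_c$ generates the cyclotomic eigenspace and $\gamma_a$ the anticyclotomic one. Then $K_{\cyc}$ is the fixed field of $\overline{\langle\gamma_a\rangle}$, because $\gamma_a$ acts trivially on the cyclotomic extension. Likewise $K_{\anti}$ is the fixed field of $\overline{\langle\gamma_c\rangle}$. The layer $K_{\cyc}^{(r)}$ is the fixed field of $\overline{\langle 3^r\gamma_c,\gamma_a\rangle}$, and $K_{\anti}^{(s)}$ is the fixed field of $\overline{\langle\gamma_c,3^s\gamma_a\rangle}$. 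Also $L=L_{\alpha,\beta}$ is the fixed field of $H=\ker\varphi_{\alpha,\beta}=\overline{\langle\beta\gamma_c-\alpha\gamma_a\rangle}$. This follows because $\varphi_{\alpha,\beta}(x\gamma_c+y\gamma_a)=x\alpha+y\beta$. At least one of $\alpha,\beta$ is a unit, so the kernel is the rank-one saturated submodule spanned by $(\beta,-\alpha)$.

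The key step is the Galois correspondence: $L\cap K_{\cyc}$ is the fixed field of the closed subgroup generated by $H$ and $\overline{\langle\gamma_a\rangle}$, that is, of $H+\Z_3\gamma_a$. This sum is $\Z_3\beta\gamma_c+\Z_3\gamma_a=\Z_3\,3^{\val_3(\beta)}\gamma_c+\Z_3\gamma_a$, since $\beta$ is a unit times $3^{\val_3(\beta)}$. Sums of compact subgroups are closed, so no closure issue arises. The fixed field is therefore $K_{\cyc}^{(\val_3(\beta))}$. For this step I will record that the layer $K_{\cyc}^{(r)}$ has Galois group $\Z_3/3^r$ over $K$, which pins down the corresponding subgroup of $G$ uniquely. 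The anticyclotomic statement is symmetric: $H+\Z_3\gamma_c=\Z_3\gamma_c+\Z_3\,3^{\val_3(\alpha)}\gamma_a$, with fixed field $K_{\anti}^{(\val_3(\alpha))}$.

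For the degenerate cases, $\beta=0$ forces $\alpha$ to be a unit. Then $H=\Z_3\gamma_a$, so $L=K_{\cyc}$. Similarly, $\alpha=0$ gives $H=\Z_3\gamma_c$ and $L=K_{\anti}$.

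The only real obstacle is bookkeeping conventions. One must check that $\gamma_c$ and $\gamma_a$ are chosen as topological generators of the eigenspaces, so that $G=\Z_3\gamma_c\oplus\Z_3\gamma_a$ holds integrally; this uses that $p$ is odd. One must also confirm the orientation: the cyclotomic field is fixed by the anticyclotomic eigenspace $\overline{\langle\gamma_a\rangle}$, not by its own eigenspace. Once these are fixed, the argument is a direct computation with submodules of $\Z_3^2$.
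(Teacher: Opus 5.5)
Your proposal is correct and follows the paper's own argument. The paper identifies $\Gal(L\cap K_{\cyc}/K)$ with $\Gal(M/K)/(\ker\varphi_{\alpha,\beta}+\Z_3\gamma_a)\simeq\Z_3/\beta\Z_3$ and gets the anticyclotomic case by symmetry; your computation $H+\Z_3\gamma_a=\Z_3 3^{\val_3(\beta)}\gamma_c+\Z_3\gamma_a$ is the same step, with the kernel generator, layer subgroups and degenerate slopes written out explicitly.
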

\begin{proof}
The Galois group of the cyclotomic intersection is
\[
 \Gal(M/K)/(\ker\varphi_{\alpha,\beta}+\Z_3\gamma_a)
 \simeq\Z_3/\beta\Z_3.
\]
The other assertion follows by interchanging the two coordinates.
\end{proof}

\begin{proposition}\label{prop:odd-three}
If $K\neq\Q(\sqrt{-3})$, then
\[
 E(M)_{\tors}^{\odd}=E(K_{\cyc})_{\tors}^{\odd},\qquad
 E(L)_{\tors}^{\odd}=E(L\cap K_{\cyc})_{\tors}^{\odd}.
\]
For $K=\Q(\sqrt{-3})$ the same assertions hold prime by prime for every odd prime other than $3$.
\end{proposition}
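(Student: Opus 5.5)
The plan is to work prime by prime over the odd primes $q$ and to reduce every case to Lemma~\ref{lem:cyclic-character}, which needs $p=3$ odd (fine) and $E(M)[q]\neq E[q]$. So the main step is to show that full $q$-torsion is never defined over $M$. Given that, Lemma~\ref{lem:cyclic-character} yields $E(M)[q^\infty]=E(K_{\cyc})[q^\infty]$, and, for any intermediate $L$, that $E(L)[q^\infty]$ is defined over $L\cap K_{\cyc}$. Summing over odd $q$ then gives both displayed equalities, because $E(L\cap K_{\cyc})\subset E(L)$ supplies the reverse inclusion.

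\emph{Case $q\neq3$.} Suppose $E[q]\subset E(M)$. The Weil pairing then forces $\mu_q\subset M$, so $\Q(\zeta_q)\subset M$. But $\Q(\zeta_q)$ is totally ramified at $q$ of degree $q-1$, and \eqref{eq:inertia-bound} bounds ramification indices at $\ell=q\neq3$ in finite subfields of $M$ by $2$. This forces $q-1\leq2$, so $q=3$, a contradiction. The argument never uses $K$, which is why it also covers $K=\Q(\sqrt{-3})$ for each odd prime $q\neq3$. That gives the final sentence of the statement.

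\emph{Case $q=3$, $K\neq\Q(\sqrt{-3})$.} Full $3$-torsion over $M$ would give $\mu_3\subset M$, hence $K(\zeta_3)\subset M$. Since $K\neq\Q(\sqrt{-3})$, the extension $K(\zeta_3)/K$ has degree $2$. This is impossible inside the pro-$3$ extension $M/K$, exactly as in the proof of Theorem~\ref{thm:main-large}. So Lemma~\ref{lem:cyclic-character} applies to $q=3$ as well.

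The only delicate point is checking that the hypotheses of Lemma~\ref{lem:cyclic-character} are met, namely that the rank-one condition holds for every finite level $q^n$ and not only for $q$. This follows since $E(M)[q]$ is cyclic, so $E(M)[q^n]$ is cyclic. The passage to $L$ is also immediate, because each point's field of definition lies in $K_{\cyc}$ and therefore, intersected with $L$, lies in $L\cap K_{\cyc}$. I expect no real obstacle beyond confirming that \eqref{eq:inertia-bound} applies to $\Q(\zeta_q)$, which is a finite subfield of $M$.
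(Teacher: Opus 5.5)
Your proposal is correct and follows the paper's proof: you exclude $\mu_q$ from $M$ via the ramification bound \eqref{eq:inertia-bound} for $q\geq5$, and via the pro-$3$ condition for $q=3$ when $K\neq\Q(\sqrt{-3})$, then apply the Weil pairing and Lemma~\ref{lem:cyclic-character}. Your extra check of cyclicity at every level $q^n$ is already contained in that lemma's proof, so it is harmless but not needed.
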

\begin{proof}
For $q\geq5$, \eqref{eq:inertia-bound} excludes $\mu_q$ from $M$. For $q=3$ and $K\neq\Q(\sqrt{-3})$, the pro-$3$ condition excludes $\mu_3$. The Weil pairing and Lemma~\ref{lem:cyclic-character} now apply.
\end{proof}

\begin{theorem}[$2$-division slope criterion]\label{thm:two-slope}
For every imaginary quadratic $K$,
\[
 E(L)[2^\infty]=E(L\cap K(E[2]))[2^\infty].
\]
The intersection equals $K$ unless one of the following holds:
\begin{enumerate}[label=\textup{(\roman*)}]
\item $\Q(E[2])=\Q(\zeta_9)^+$ and $\beta\equiv0\pmod3$; the intersection is $K_{\cyc}^{(1)}$.
\item $K(E[2])=K_{\anti}^{(1)}$ and $\alpha\equiv0\pmod3$; the intersection is $K_{\anti}^{(1)}$.
\end{enumerate}
The displayed equality also holds with $L$ replaced by $M$.
\end{theorem}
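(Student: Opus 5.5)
The displayed equality will come straight from Lemma~\ref{lem:coprime} with $q=2$ and $p=3$. Both $L/K$ and $M/K$ are Galois pro-$3$, and the lemma gives $E(L)[2^\infty]=E(L\cap K(E[2]))[2^\infty]$; the same application works for $M$. The real content is therefore the computation of $L\cap K(E[2])$. Since $\Gal(K(E[2])/K)$ embeds in $\GL_2(\F_2)\simeq S_3$, this intersection is a Galois extension of $K$ of $3$-power degree inside $L$. So it is either $K$ or a cyclic cubic extension $C/K$ with $C=K(E[2])$. In the latter case $\Gal(K(E[2])/K)\simeq C_3$ and $C\subset M$.

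First I will determine which cubic subextensions of $M/K$ can occur. The cubic subfields of $M$ are the fixed fields of the index-$3$ subgroups of $\Gal(M/K)\simeq\Z_3^2$, and they correspond to lines in $\F_3^2$. The field $C=K(E[2])$ is Galois over $\Q$, because $E$ is rational. Complex conjugation therefore preserves the corresponding subgroup, so the line is an eigenline of conjugation acting on $\F_3^2$ by $\operatorname{diag}(1,-1)$. Hence $C$ is either $K_{\cyc}^{(1)}$ or $K_{\anti}^{(1)}$.

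\emph{Cyclotomic case.} If $C=K_{\cyc}^{(1)}=K\Q_{1,3}$, then $\Q(E[2])$ is a Galois extension of $\Q$ with group embedding in $S_3$. Its compositum with $K$ is $K\Q(\zeta_9)^+$, and I will show this forces $\Q(E[2])=\Q(\zeta_9)^+$. Indeed, $\Q(E[2])$ has degree $3$ or $6$. If the degree were $6$, then $\Q(E[2])=K\Q(\zeta_9)^+$ would be abelian over $\Q$, contradicting that its group is $S_3$. So the degree is $3$, and $\Q(E[2])$ is the unique cubic subfield of $K\Q(\zeta_9)^+$ that is cyclic over $\Q$, namely $\Q(\zeta_9)^+$. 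Conversely, $\Q(E[2])=\Q(\zeta_9)^+$ gives $C=K_{\cyc}^{(1)}$.

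\emph{Anticyclotomic case.} This is simply the condition $K(E[2])=K_{\anti}^{(1)}$.

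Next I will decide when $C\subset L$, using Lemma~\ref{lem:axis}. The cyclotomic layer $K_{\cyc}^{(1)}$ lies in $L$ iff $L\cap K_{\cyc}\supseteq K_{\cyc}^{(1)}$. For $\beta\neq0$ this means $\val_3(\beta)\ge1$, i.e.\ $\beta\equiv0\pmod 3$; when $\beta=0$ we have $L=K_{\cyc}$ and the condition holds trivially. The anticyclotomic condition $\alpha\equiv0\pmod3$ is symmetric.

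I can also see this directly: $\Gal(M/L)$ is generated by $\beta\gamma_c-\alpha\gamma_a$, while $K_{\cyc}^{(1)}$ is fixed by the subgroup $\langle 3\gamma_c,\gamma_a\rangle$. Hence $K_{\cyc}^{(1)}\subset L$ iff this generator lies in that subgroup, iff $3\mid\beta$. In that case the intersection equals $C$ itself, which gives exactly cases (i) and (ii).

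The only genuinely nontrivial step is the eigenline argument showing that a cubic subfield of $M$ that is Galois over $\Q$ must be one of the two axis layers. I will check it carefully, including the reduction of conjugation's action to $\F_3^2$ as $\operatorname{diag}(1,-1)$, which uses that $3$ is odd. For $K=\Q(\sqrt{-3})$ nothing changes, since only $2$-torsion is involved.
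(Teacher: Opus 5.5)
Your proposal is correct and follows essentially the same route as the paper: Lemma~\ref{lem:coprime} gives the equality, and the $S_3$ quotient analysis forces a nontrivial intersection to be a cyclic cubic $K(E[2])\subset M$. From there you use the eigenline argument in $\Gamma/3\Gamma$, the abelian-versus-$S_3$ argument pinning down $\Q(E[2])=\Q(\zeta_9)^+$ in the cyclotomic case, and Lemma~\ref{lem:axis} (or your equivalent direct check on the generator $\beta\gamma_c-\alpha\gamma_a$) for the congruences; the one step you leave implicit, that a cubic intersection must be all of $K(E[2])$, rests on $S_3$ having no quotient of order $3$, which the paper states explicitly.
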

\begin{proof}
The equality follows from Lemma~\ref{lem:coprime}. Put $D=K(E[2])$. Its Galois group over $K$ embeds into $S_3$. A nontrivial pro-$3$ quotient of this group exists only when the group itself is $C_3$; $S_3$ has no quotient of order $3$. Thus a nontrivial intersection forces $D/K$ to be cyclic cubic and $D\subset M$.

Since $D/\Q$ is Galois, the kernel of the quotient $\Gal(M/K)\to\Gal(D/K)$ is stable under conjugation. Put $\Gamma=\Gal(M/K)$. The only conjugation-invariant lines in $\Gamma/3\Gamma$ are the two eigenspaces. Hence $D$ is $K_{\cyc}^{(1)}$ or $K_{\anti}^{(1)}$.

In the cyclotomic case put $S=\Q(E[2])$. The group $\Gal(K_{\cyc}^{(1)}/\Q)\simeq C_6$ has only cyclic quotients, whereas $\Gal(S/\Q)$ embeds in $S_3$. As $KS=K_{\cyc}^{(1)}$, $S$ must be its unique cubic subfield, $\Q(\zeta_9)^+$. In the anticyclotomic case $S=K_{\anti}^{(1)}$ is an $S_3$ splitting field with quadratic resolvent $K$. Lemma~\ref{lem:axis} translates the two containments into the indicated congruences. The converses follow at once.
\end{proof}

\begin{proof}[Proof of Theorem~\ref{thm:main-slope}]
Apply Proposition~\ref{prop:odd-three} and Theorem~\ref{thm:two-slope} to the odd and $2$-primary parts, respectively. Lemma~\ref{lem:axis} gives the cyclotomic intersection. When both coordinates are units, the two intersections equal $K$.
\end{proof}

\begin{corollary}\label{cor:sharp-two}
If $K\neq\Q(i)$ and $L\cap K(E[2])\neq K$, then $E(L)[2^\infty]=E[2]$.
\end{corollary}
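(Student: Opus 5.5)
The plan is to reduce to the finite field $D=K(E[2])$ using Theorem~\ref{thm:two-slope}, and then rule out $4$-torsion over $D$ with a small Galois-module argument.

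\emph{Reduction to $D$.} By Theorem~\ref{thm:two-slope}, $E(L)[2^\infty]=E(L\cap D)[2^\infty]$. Its proof shows that a nontrivial intersection forces $D/K$ to be cyclic cubic and equal to $K_{\cyc}^{(1)}$ or $K_{\anti}^{(1)}$. In either case the intersection is all of $D$, so $E(L)[2^\infty]=E(D)[2^\infty]\supseteq E[2]$. It therefore suffices to show $E(D)[4]=E[2]$, since any larger $2$-primary group would already contain a point of order $4$.

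\emph{The case $E(D)[4]\simeq C_4\times C_2$.} Suppose $E(D)[4]\supsetneq E[2]$. Then $E(D)[4]$ is either $C_4\times C_2$ or $E[4]$. In the first case, $2E(D)[4]$ is a single subgroup of order $2$. It is stable under $\Gal(D/\Q)$ because $D/\Q$ is Galois, and in particular it is stable under $\Gal(D/K)\simeq C_3$. On the other hand, since $D=K(E[2])$ has degree $3$ over $K$, a generator of $\Gal(D/K)$ acts faithfully on $E[2]$ as an element of order $3$ in $\GL_2(\F_2)\simeq S_3$. Such an element permutes the three nonzero $2$-torsion points cyclically and fixes none of them. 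This is a contradiction.

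\emph{The case $E(D)[4]=E[4]$.} Here the Weil pairing gives $\mu_4\subset D$, so $\Q(i)\subset D$. Now I determine the quadratic subfields of the sextic Galois field $D/\Q$. In case (i) of Theorem~\ref{thm:two-slope}, $D=K_{\cyc}^{(1)}$ has Galois group $C_6$ over $\Q$, whose unique quadratic subfield is $K$. In case (ii), $D=\Q(E[2])$ is an $S_3$-extension, whose unique quadratic subfield is its resolvent, again $K$. In both cases $\Q(i)=K$, which contradicts the hypothesis.

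The only step needing care is identifying the quadratic subfields of $D$ in the two cases. This follows directly from the group structures $C_6$ and $S_3$ established in the proof of Theorem~\ref{thm:two-slope}, so I expect no real obstacle.
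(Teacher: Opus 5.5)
Your proof is correct and follows the paper's argument: the $3$-cycle action of $\Gal(D/K)$ on $E[2]$ rules out a $G_K$-stable line $2E(D)[4]$ (the paper phrases this as $P$ and one conjugate doubling to independent points and so generating $E[4]$), and the Weil pairing then puts $i$ in $D$. Your final step through the quadratic subfields of the $C_6$ or $S_3$ extension $D/\Q$ is valid, but the paper's route is shorter: $[K(i):K]\le2$ must divide $[D:K]=3$, so $i\in K$, contradicting $K\neq\Q(i)$.
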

\begin{proof}
The cyclic cubic group permutes the three nonzero $2$-torsion points transitively. If a point $P$ of order $4$ existed over $D=K(E[2])$, then $P$ and one conjugate would double to independent points of $E[2]$ and hence generate $E[4]$. The Weil pairing would put $i$ in $D$, contradicting $[D:K]=3$ and $K\neq\Q(i)$.
\end{proof}

\begin{corollary}\label{cor:odd-list}
For $K\neq\Q(\sqrt{-3})$, the odd torsion over $K_{\cyc}^{(r)}$ is cyclic and its order belongs to
\begin{center}
\begin{tabular}{ll}
\toprule
Layer & Possible odd orders\\
\midrule
$r=0$ & $1,3,5,7,9,15$\\
$r=1$ & $1,3,5,7,9,15,21$\\
$r\geq2$, including $r=\infty$ & $1,3,5,7,9,15,21,27$\\
\bottomrule
\end{tabular}
\end{center}
In particular, $13$ does not occur, and a point of order $27$ on a noncyclotomic slope requires $\val_3(\beta)\geq2$.
\end{corollary}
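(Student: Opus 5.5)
The plan is to reduce odd torsion over $K_{\cyc}^{(r)}=K\Q_{r,3}$ to torsion over the cyclotomic layer $\Q_{r,3}$ of $\Q$, and then read off the allowed orders from the classification of \cite{CDKN}.

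\emph{Cyclicity.} Suppose $E(K_{\cyc}^{(r)})$ contained full $q$-torsion for some odd prime $q$. The Weil pairing would then give $\mu_q\subset K_{\cyc}^{(r)}$. For $q\geq5$ this contradicts \eqref{eq:inertia-bound}. For $q=3$ it is impossible as well: $[K(\zeta_3):K]=2$ because $K\neq\Q(\sqrt{-3})$, while $K_{\cyc}^{(r)}/K$ is pro-$3$. So the odd torsion is cyclic.

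\emph{Twist decomposition.} Write $K=\Q(\sqrt d)$. Applying \eqref{eq:twist} with $B=\Q_{r,3}$ gives, for odd $n$,
\[
E(K_{\cyc}^{(r)})[n]\simeq E(\Q_{r,3})[n]\oplus E^d(\Q_{r,3})[n].
\]
Because the left side is cyclic, the two summands have coprime orders. Hence the odd order $N$ factors as $N=N_1N_2$ with $\gcd(N_1,N_2)=1$, where $N_1$ and $N_2$ are odd torsion orders of rational elliptic curves over $\Q_{r,3}$.

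\emph{Possible factors.} By \cite{CDKN}, torsion over $\Q_{\infty,3}$ is either in Mazur's list or is $C_{21}$ or $C_{27}$. The odd parts are therefore $1,3,5,7,9$, together with $21$ and $27$. I still need to know at which layer $21$ and $27$ can first appear. A $21$-torsion point over a field of degree $3^r$ cannot exist over $\Q$ (Mazur), so it needs $r\geq1$. A point of order $27$ likewise needs $r\geq1$. I would then show that $27$ actually needs $r\geq2$. The cubic-field classification of \cite{Najman} (torsion of rational curves over cubic fields) does include $C_{21}$ but not $C_{27}$, and $\Q_{1,3}=\Q(\zeta_9)^+$ is cubic. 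This settles the restriction that $27$ cannot occur for $r=1$. If instead \cite{CDKN} records the layer at which each group appears, I would cite that directly.

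\emph{Combining factors.} Products of coprime factors from $\{1,3,5,7,9,21,27\}$ are $1,3,5,7,9,21,27,15,35,45,63,\dots$. I will exclude every product beyond $15$, $21$ and $27$ by bounding both $E$ and $E^d$ simultaneously. Take $5\cdot7$ as the model case. It would give a rational $35$-isogeny: $E$ has a rational $5$-isogeny and $E^d$ has a rational $7$-isogeny, and $E^d$ shares its isogenies with $E$. Since $35$ is not in \eqref{eq:isogeny-list}, this is impossible. The same argument handles $45$, $63$, $5\cdot21$, $5\cdot27$, $7\cdot9$, $7\cdot27$ and so on, because all are products coprime to each other whose least common multiple is not on the list. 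Orders with both factors divisible by $3$ are already excluded by coprimality. One case needs care: $15=3\cdot5$ is on the isogeny list and does occur already at $r=0$, consistent with the quadratic-field classification in \cite{Najman}. For $r=0$ I would simply cite the torsion classification of rational curves over quadratic fields, whose odd parts are $1,3,5,7,9,15$.

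\emph{Final assertions.} Since $13$ never appears as a factor, it does not occur. For the slope assertion, Theorem~\ref{thm:main-slope} shows that odd torsion on $L$ lives on $L\cap K_{\cyc}=K_{\cyc}^{(\val_3(\beta))}$. By the table, $27$ then forces $\val_3(\beta)\geq2$.

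I expect the main obstacle to be pinning down the exact first layer for $21$ versus $27$, that is, showing $27$ is absent over $\Q(\zeta_9)^+$. I will settle this with the cubic torsion classification of \cite{Najman}.
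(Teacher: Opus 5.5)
Your proposal is correct and follows essentially the paper's proof. Both use the twist decomposition \eqref{eq:twist} with \cite{CDKN} to restrict the primary parts, the Weil pairing for cyclicity, and the isogeny list \eqref{eq:isogeny-list} to exclude mixed products. Both also use the quadratic classification for $r=0$, the cubic classification of \cite{Najman} to exclude $27$ over $K_{\cyc}^{(1)}$, and the axis intersection (Lemma~\ref{lem:axis}) for the slope condition. The one cosmetic difference is how you get the rational cyclic isogeny: you combine the $G_\Q$-stable cyclic subgroups from $E$ and $E^d$ separately, whereas the paper observes directly that the whole cyclic odd torsion over the Galois field $K_{\cyc}^{(r)}$ is $G_\Q$-stable; the two are equivalent.
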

\begin{proof}
Equation~\eqref{eq:twist} and \cite{CDKN} leave only the primes $3,5,7$, with exponents at most $27,5,7$. The Weil pairing shows each primary component is cyclic. Their product is $G_\Q$-stable and gives a rational cyclic isogeny; \eqref{eq:isogeny-list} excludes the unlisted products. The quadratic classification gives the first row. In the first cyclotomic layer, a point of order $27$ would yield such a point on $E$ or its quadratic twist over a cubic field, contrary to \cite{Najman}. Lemma~\ref{lem:axis} gives the condition on $\beta$. The listed orders are necessary bounds; some may not occur for a fixed $K$.
\end{proof}

\begin{example}
The curve $y^2=x^3-3x+1$ has $2$-division field $\Q(\zeta_9)^+$: its cubic is irreducible with discriminant $81$. Thus every slope with $\beta\equiv0\pmod3$ acquires its full $2$-torsion. For $K=\Q(i)$, the splitting field of $x^3-3x-4$ is $K_{\anti}^{(1)}$ \cite{Brink}; the curve $y^2=x^3-3x-4$ illustrates the other residue class in Theorem~\ref{thm:two-slope}.
\end{example}

\subsection{Corrections to Li's version 1}\label{subsec:Li}

Distinct rank-one quotients of $\Z_3^2$ can have the same reduction modulo $3^r$. For example,
\begin{equation}\label{eq:noncyc-intersection}
 L_{1,3}\neq K_{\cyc},\qquad
 L_{1,3}\cap K_{\cyc}=K_{\cyc}^{(1)}.
\end{equation}
The proof of Lemma~4.2(2) in \cite[p.~12]{Li} asserts that distinct extensions intersect only in $K$, contrary to \eqref{eq:noncyc-intersection}. The roots of unity in $L_{1,3}$ also contradict the statement of that lemma.

\begin{proposition}[Roots of unity]\label{prop:roots}
For $K=\Q(\sqrt{-3})$ and $\beta\neq0$,
\[
 \mu(L_{\alpha,\beta})=\mu_{2\cdot3^{\val_3(\beta)+1}}.
\]
For $\beta=0$ the group is $\mu_{2\cdot3^\infty}$.
\end{proposition}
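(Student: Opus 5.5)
The plan is to separate the $3$-power roots of unity from those of order prime to $3$, and to locate the former by means of the cyclotomic intersection computed in Lemma~\ref{lem:axis}.

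\emph{Prime-to-$3$ part.} Since $K=\Q(\sqrt{-3})$ already contains $\mu_6$, we have $\mu_6\subset L$ for every slope. If $\ell\neq 3$ is a prime with $\mu_\ell\subset L$ and $\mu_\ell\not\subset K$, then $K(\zeta_\ell)/K$ is a nontrivial subextension of the pro-$3$ extension $L/K$. It is also ramified at $\ell$, because $e_\ell(\Q(\zeta_\ell)/\Q)=\ell-1$ exceeds $[K:\Q]=2$ once $\ell\geq5$. This contradicts the fact that $\widetilde K_3/K$ is unramified outside $3$, as recorded in \eqref{eq:inertia-bound}. The same argument disposes of $\mu_4$: the extension $K(i)/K$ has degree $2$, which is prime to $3$, and it is ramified at $2$. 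Hence the prime-to-$3$ part of $\mu(L)$ is exactly $\mu_2$.

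\emph{$3$-power part.} Here I would use that $K_{\cyc}=K(\mu_{3^\infty})$ for this $K$. Indeed, $K(\mu_{3^\infty})=\Q(\mu_{3^\infty})$ is a $\Z_3$-extension of $K=\Q(\mu_3)$ on which complex conjugation acts trivially modulo $K$, since it is abelian over $\Q$; so it is the cyclotomic one. Its layers are $K_{\cyc}^{(r)}=\Q(\mu_{3^{r+1}})$. Now $\mu_{3^n}\subset L$ holds if and only if $\Q(\mu_{3^n})\subset L\cap K_{\cyc}$. By Lemma~\ref{lem:axis} this intersection is $K_{\cyc}^{(\val_3(\beta))}=\Q(\mu_{3^{\val_3(\beta)+1}})$ when $\beta\neq0$, and $L=K_{\cyc}$ when $\beta=0$. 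Therefore the $3$-part of $\mu(L)$ is $\mu_{3^{\val_3(\beta)+1}}$ for $\beta\neq0$, and $\mu_{3^\infty}$ for $\beta=0$. Combining the two parts gives the statement.

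The main point needing care is the identification of $K(\mu_{3^\infty})$ with $K_{\cyc}$ and of its layers with $\Q(\mu_{3^{r+1}})$. This reduces to the degree count $[\Q(\mu_{3^{r+1}}):\Q(\mu_3)]=3^r$ together with the eigenspace description of the cyclotomic direction. One should also check that $\mu_{3^n}\subset L$ indeed forces $\Q(\mu_{3^n})$ into $L\cap K_{\cyc}$; this is immediate, since $\Q(\mu_{3^n})\subset K_{\cyc}$.
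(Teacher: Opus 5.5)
Your proposal is correct and matches the paper's proof: the pro-$3$ condition rules out $\mu_4$, the ramification bound \eqref{eq:inertia-bound} rules out $\mu_\ell$ for primes $\ell\geq5$, and the $3$-power part comes from $K_{\cyc}=K(\mu_{3^\infty})$, whose $r$-th layer is $\Q(\mu_{3^{r+1}})$, combined with Lemma~\ref{lem:axis}. The only addition is that you justify the identification $K(\mu_{3^\infty})=K_{\cyc}$, using the trivial conjugation action on a quotient that is abelian over $\Q$, whereas the paper takes this for granted.
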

\begin{proof}
The pro-$3$ condition excludes $i$, and \eqref{eq:inertia-bound} excludes roots of unity of prime order at least $5$. All $3$-power roots of unity lie on $K_{\cyc}=K(\mu_{3^\infty})$, whose $r$-th layer is $K(\mu_{3^{r+1}})$. Apply Lemma~\ref{lem:axis}.
\end{proof}

In particular, $L_{1,3}$ contains $\mu_9$ but not $\mu_{27}$, and its roots of unity form $\mu_{18}$. This contradicts the dichotomy in \cite[Lemma~4.2(2)]{Li} between no new roots of unity and containment of the entire cyclotomic extension.

\begin{example}[Counterexample to Li's Proposition 4.8]\label{ex:seven}
Let $K=\Q(i)$ and
\[
 E:y^2+xy+y=x^3-x^2-5x+5.
\]
Let $u^3-3u+1=0$ and $B=\Q(u)=\Q(\zeta_9)^+$. The point
\[
 P=(3-u-u^2,\;1-u-u^2)
\]
has order $7$ on $E$, by direct calculation with the group law. Its $x$-coordinate is not rational, so $\Q(P)=B$. Since $B\cap K=\Q$, the point is not defined over $K$, whereas \eqref{eq:noncyc-intersection} gives $P\in E(L_{1,3})$. Thus
\[
 E(L_{1,3})[7^\infty]\neq E(K)[7^\infty]
\]
although $L_{1,3}/K$ is noncyclotomic. Computation over these number fields gives $E(K)_{\tors}=C_3$ and $E(B)_{\tors}=C_{21}$.
\end{example}

The proof of \cite[Proposition~4.8, p.~14]{Li} infers that a noncyclotomic extension cannot contain a nontrivial finite cyclotomic layer. Proposition~4.7 of that paper identifies a cyclotomic field of definition, but \eqref{eq:noncyc-intersection} shows why the inference fails. The correct formula, also valid for $K=\Q(\sqrt{-3})$, is
\[
 E(L)[7^\infty]=E(L\cap K_{\cyc})[7^\infty].
\]
The assertion on $7$-primary torsion in \cite[Theorem~1.1(ii)]{Li} requires the same correction.

\section{The exceptional field \texorpdfstring{$\Q(\sqrt{-3})$}{Q(sqrt(-3))}}\label{sec:exceptional}

We begin with a class field calculation that also applies to two quadratic fields at $p=2$.

\begin{lemma}[Ray class description of the compositum]\label{lem:ray-compositum}
For
\[
 (K,p)=(\Q(\sqrt{-3}),3),\quad(\Q(i),2),\quad(\Q(\sqrt{-2}),2),
\]
the maximal abelian extension of $K$ unramified outside $p$ equals $\M$, and its Galois group is $\Z_p^2$.
\end{lemma}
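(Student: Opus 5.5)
Let $K_p$ be the maximal abelian pro-$p$ extension of $K$ unramified outside $p$. The plan is to describe $\Gal(K_p/K)$ by class field theory and show it is torsion-free of rank $2$. Since every $\Z_p$-extension of $K$ is unramified outside $p$, we have $\M\subset K_p$. If $\Gal(K_p/K)\simeq\Z_p^2$, then $K_p$ is itself a compositum of $\Z_p$-extensions, so $K_p=\M$. The statement speaks of the maximal abelian extension unramified outside $p$, not just its pro-$p$ part. I would therefore also check that the prime-to-$p$ part of this extension is trivial, or read the statement as referring to the $p$-part.

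By class field theory,
\[
 \Gal(K_p/K)\simeq \varprojlim_n \mathrm{Cl}_K(\mathfrak p^n)\otimes\Z_p,
\]
where $\mathfrak p$ is the unique prime above $p$. In all three cases $p$ is totally ramified in $K$: $3=-(\sqrt{-3})^2$, $2=-i(1+i)^2$, $2=-(\sqrt{-2})^2$. There is the exact sequence
\[
 \mathcal O_K^\times\to U_{\mathfrak p}\to \varprojlim_n\mathrm{Cl}_K(\mathfrak p^n)\to \mathrm{Cl}_K\to 0,
\]
with $U_{\mathfrak p}=\mathcal O_{K_{\mathfrak p}}^\times$. Each of these fields has class number $1$. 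Hence the Galois group is $U_{\mathfrak p}/\overline{\mathcal O_K^\times}$. Here $\mathcal O_K^\times=\mu_6,\mu_4,\mu_2$, respectively, and it is finite, so no closure issue arises.

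Next I would compute $U_{\mathfrak p}$. Write $U_{\mathfrak p}=\mu(K_{\mathfrak p})_{\text{prime-to-}p}\times U^1$, where $U^1$ is the group of principal units. Then
\[
 U^1\simeq \mu_{p^\infty}(K_{\mathfrak p})\times\Z_p^{2},
\]
since $[K_{\mathfrak p}:\Q_p]=2$. The torsion of $U_{\mathfrak p}$ is $\mu(K_{\mathfrak p})$. For $\Q_3(\sqrt{-3})$ this is $\mu_6$. For $\Q_2(i)$ it is $\mu_4$, since $\zeta_8\notin\Q_2(i)$, because $\Q_2(\zeta_8)$ has degree $4$. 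For $\Q_2(\sqrt{-2})$ it is $\mu_2$, since $i\notin\Q_2(\sqrt{-2})$. The residue field is $\F_p$ in every case, so the prime-to-$p$ roots of unity are $\mu_{p-1}$. These are $\mu_2$ for $p=3$ and trivial for $p=2$. This answers the prime-to-$p$ question above: that part is already contained in the image of the global units.

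The key point is that the global unit group $\mathcal O_K^\times$ maps isomorphically onto the full torsion subgroup $\mu(K_{\mathfrak p})$ in each case. Therefore the quotient is
\[
 U_{\mathfrak p}/\mu(K_{\mathfrak p})\simeq\Z_p^2,
\]
which is torsion-free of rank $2$. This gives $K_p=\M$ with Galois group $\Z_p^2$. The same computation shows that the full ray class group limit (all of $U_{\mathfrak p}$ mod units, over trivial class group) is pro-$p$. Hence the maximal abelian extension unramified outside $p$ (allowing also arbitrary ramification at $p$) is already $K_p$.

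The main obstacle I expect is making the local torsion computation airtight: confirming that $\mu(K_{\mathfrak p})$ is exactly $\mu_6,\mu_4,\mu_2$. I would do this by a degree or ramification-index argument. A primitive $p^{k}$-th root of unity has ramification index $\varphi(p^k)$ over $\Q_p$, and this is at most $2$ only for $p^k\le 3$ when $p=3$, and $p^k\le 4$ when $p=2$. For $\Q_2(\sqrt{-2})\neq\Q_2(i)$, I would check directly that $-1$ is not a square there, for instance because $\Q_2(i)$ and $\Q_2(\sqrt{-2})$ are distinct quadratic extensions.
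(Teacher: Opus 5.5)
Your proposal is correct and follows the paper's proof essentially step for step. Class number one reduces the inverse limit of ray class groups to $\mathcal O_{K_{\mathfrak p}}^\times/\mathcal O_K^\times$, and the local torsion coincides with the global roots of unity $\mu_6,\mu_4,\mu_2$, so the quotient is $\Z_p^2$ with no prime-to-$p$ part; its rank-one quotients then identify the extension with $\M$. Your ramification-index check of the local roots of unity makes explicit a step that the paper only asserts.
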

\begin{proof}
Each field has class number one and a unique prime $\mathfrak p$ above $p$. The inverse limit of its ray class groups of $\mathfrak p$-power conductor is
\[
 \mathcal O_{K_{\mathfrak p}}^\times/
       \overline{\mathcal O_K^\times}.
\]
In these three cases the torsion subgroup of the local unit group consists exactly of the global roots of unity, of orders $6,4,2$, respectively. The quotient is therefore torsion-free of $\Z_p$-rank two. There is no prime-to-$p$ factor. Class field theory therefore identifies the maximal abelian extension unramified outside $p$ with a $\Z_p^2$-extension. Its rank-one quotients, together with the fact that every $\Z_p$-extension is unramified outside $p$, identify it with $\M$.
\end{proof}

\begin{corollary}[Full primary torsion for CM curves]\label{cor:cm-sufficient}
For any of the three pairs in Lemma~\ref{lem:ray-compositum}, suppose $E/\Q$ has complex multiplication by an order in $K$ and has good reduction over $K$ outside $p$. Then
\[
 E[p^\infty]\subset E(\M).
\]
\end{corollary}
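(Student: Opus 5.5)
The plan is to use CM theory over $K$. Since $E/\Q$ has CM by an order $\mathcal O\subset\mathcal O_K$ and the CM is defined over $K$, the Galois action of $G_K$ on $T_p(E)$ commutes with $\mathcal O\otimes\Z_p$. Hence it factors through an abelian group, $(\mathcal O\otimes\Z_p)^\times$ or its commutant in $\operatorname{End}(T_p E)$. This commutant is commutative because $T_p(E)$ is free of rank one over $\mathcal O\otimes\Z_p$. In the cases at hand the conductor of $\mathcal O$ can be handled directly. For $K=\Q(i),\Q(\sqrt{-2})$ the order is maximal. For $\Q(\sqrt{-3})$ the orders of conductor $2$ or $3$ are possible, but the commutant of $\mathcal O\otimes\Z_p$ in $\operatorname{End}_{\Z_p}(T_pE)$ is still abelian because $\mathcal O\otimes\Q_p$ is a maximal commutative subalgebra of $M_2(\Q_p)$. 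Therefore $K(E[p^\infty])/K$ is an abelian extension.

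Next I would show this abelian extension is unramified outside $p$. For a prime $\mathfrak l\nmid p$ of $K$, $E$ has good reduction at $\mathfrak l$ by hypothesis. The criterion of N\'eron--Ogg--Shafarevich then gives that inertia at $\mathfrak l$ acts trivially on $T_p(E)$. Thus $K(E[p^\infty])$ is an abelian extension of $K$ unramified outside $p$. Archimedean places play no role since $K$ is imaginary quadratic.

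Lemma~\ref{lem:ray-compositum} now identifies the maximal abelian extension of $K$ unramified outside $p$ with $\M$. So $K(E[p^\infty])\subset\M$, which gives $E[p^\infty]\subset E(\M)$.

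The step needing most care is the first: the abelianness of the image of $G_K$ when $\mathcal O$ is nonmaximal. That situation does occur for $\Q(\sqrt{-3})$, with $j=54000$ and $j=-12288000$. I would handle it by the maximal-commutative-subalgebra argument above. The argument needs only that $G_K$ commutes with the endomorphisms, which holds since all endomorphisms of a CM curve over $\Q$ are defined over its CM field $K$. It does not need $T_p$ to be free over $\mathcal O\otimes\Z_p$. I would also check that the good-reduction hypothesis over $K$ (rather than potential good reduction) is exactly what kills inertia at each $\mathfrak l\nmid p$. Without it, inertia would act through a nontrivial finite character and the extension could ramify outside $p$.
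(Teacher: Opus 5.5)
Your proposal is correct and follows the paper's argument. The image of $G_K$ on $T_p(E)$ lies in the commutant $(K\otimes\Q_p)^\times$ of the CM action, so it is abelian; the N\'eron--Ogg--Shafarevich criterion makes it unramified outside $p$; and Lemma~\ref{lem:ray-compositum} places $K(E[p^\infty])$ inside $\M$.

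One side remark is inaccurate. For $K=\Q(i)$ the order need not be maximal: $\Z[2i]$ (discriminant $-16$, $j=287496$) occurs, as in Table~\ref{tab:cm-two}. This does not affect the proof, because your maximal-commutative-subalgebra argument is independent of the conductor and covers that case as well.
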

\begin{proof}
All geometric endomorphisms are defined over $K$. The $p$-adic image over $K$ centralizes their action, and is therefore contained in the multiplicative group of the quadratic $\Q_p$-algebra $K\otimes\Q_p$; in particular it is abelian. Good reduction outside $p$ implies that this representation is unramified there by the N\'eron--Ogg--Shafarevich criterion \cite{Silverman}. Lemma~\ref{lem:ray-compositum} gives the containment, also when the CM order is nonmaximal.
\end{proof}

In the rest of this section put $K=\Q(\sqrt{-3})$ and $M=\widetilde K_3$.

\subsection{Full \texorpdfstring{$3$}{3}-torsion and a non-CM family}

\begin{theorem}[Full $3$-torsion criterion]\label{thm:full-three}
One has $E[3]\subset E(M)$ if and only if
\[
 K(E[3])=K\quad\text{or}\quad K(E[3])=K_{\anti}^{(1)}.
\]
In the second case,
\[
 E[3]\subset E(L_{\alpha,\beta})
 \quad\Longleftrightarrow\quad \alpha\equiv0\pmod3.
\]
In particular, $K(E[3])$ cannot equal $K_{\cyc}^{(1)}$.
\end{theorem}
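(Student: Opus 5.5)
The plan is to analyse $D=K(E[3])$ with $K=\Q(\sqrt{-3})$. Since $\mu_3\subset K$, the Weil pairing gives $\det\bar\rho_{E,3}(G_K)=1$, so $\Gal(D/K)$ embeds in $\mathrm{SL}_2(\F_3)$. That group has order $24$ and its Sylow $3$-subgroups have order $3$.

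\textbf{Forward direction.} Suppose $E[3]\subset E(M)$. Then $D\subset M$, so $\Gal(D/K)$ is a quotient of $\Z_3^2$. It is therefore an abelian $3$-group inside $\mathrm{SL}_2(\F_3)$, hence trivial or $C_3$. In the $C_3$ case, $D$ is a cubic subextension of $M/K$ that is Galois over $\Q$, because $E$ is defined over $\Q$. The kernel of $\Gal(M/K)\to\Gal(D/K)$ is then stable under complex conjugation. As in the proof of Theorem~\ref{thm:two-slope}, the only conjugation-stable index-$3$ subgroups of $\Gamma=\Gal(M/K)$ are the preimages of the two eigenlines in $\Gamma/3\Gamma$, so $D\in\{K_{\cyc}^{(1)},K_{\anti}^{(1)}\}$.

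\textbf{Excluding the cyclotomic case.} This is the main obstacle. Suppose $D=K_{\cyc}^{(1)}=\Q(\zeta_9)$. Then $\Gal(D/\Q)\simeq C_6$ is abelian. I would argue through the image of $G_\Q$ in $\GL_2(\F_3)$, which would be cyclic of order $6$ with determinant the mod-$3$ cyclotomic character. Its kernel on $G_K$ is cyclic of order $3$, so up to conjugacy it is generated by a unipotent $U$. The full image therefore lies in the normaliser of $\langle U\rangle$, the Borel subgroup, and is generated by an element $g$ of order $6$ commuting with $U$. Such elements are $\pm U$ times scalars, and their determinant is always $1$. This contradicts the requirement that complex conjugation have determinant $-1$. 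So $K(E[3])=K_{\cyc}^{(1)}$ is impossible.

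\textbf{Converse and the slope condition.} If $D=K$, the containment is trivial. If $D=K_{\anti}^{(1)}$, then $D\subset M$. For the slope statement, $E[3]\subset E(L_{\alpha,\beta})$ holds exactly when $K_{\anti}^{(1)}\subset L_{\alpha,\beta}$. By Lemma~\ref{lem:axis}, $L\cap K_{\anti}=K_{\anti}^{(\val_3(\alpha))}$, with $L=K_{\anti}$ when $\alpha=0$. So the containment holds if and only if $\val_3(\alpha)\ge1$, that is, $\alpha\equiv0\pmod3$.
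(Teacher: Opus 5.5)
Your proof is correct and is essentially the paper's argument. Both rest on the fact that complex conjugation, having determinant $-1$, inverts rather than centralizes the unipotent image of $G_K$; the paper uses this directly to see that $\Gal(D/K)$ is an anticyclotomic quotient, whereas you first narrow $D$ to the two first layers as in Theorem~\ref{thm:two-slope} and then rule out $\Q(\zeta_9)$ because the centralizer $\{\pm U^k\}$ of $U$ lies in $\mathrm{SL}_2(\F_3)$ (your slope argument via Lemma~\ref{lem:axis} also matches the paper's).
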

\begin{proof}
Suppose $D=K(E[3])\subset M$. The mod-$3$ image of $G_K$ is a $3$-subgroup of $\GL_2(\F_3)$, hence is trivial or cyclic of order $3$. In the latter case it is conjugate to the upper unipotent subgroup. Complex conjugation has eigenvalues $1,-1$ on $E[3]$. As it normalizes the unipotent subgroup, it inverts that subgroup. Thus $D/K$ is an anticyclotomic cubic quotient of $\Gal(M/K)$ and $D=K_{\anti}^{(1)}$. The converse is immediate, and the slope condition follows from Lemma~\ref{lem:axis}.
\end{proof}

\begin{theorem}[A family with fixed $3$-division field]\label{thm:hesse-family}
Let $r=\sqrt[3]{3}$ be real. For $t\in\Q$ define
\begin{equation}\label{eq:hesse-family}
 E_t:\ y^2=x^3-27t(t^3+24)x+54(t^6-60t^3-72).
\end{equation}
Then
\[
 \Q(E_t[3])=\Q(\sqrt{-3},r)=K_{\anti}^{(1)}.
\]
Consequently, $E_t[3]$ is defined over $L_{\alpha,\beta}$ exactly when $\alpha\equiv0\pmod3$. The family contains infinitely many non-CM $j$-invariants.
\end{theorem}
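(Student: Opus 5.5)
The plan is to compute the $3$-division field of $E_t$ directly from the $3$-division polynomial, to identify $\Q(\sqrt{-3},r)$ with $K_{\anti}^{(1)}$ by a ramification and eigenspace argument, and then to read off the slope statement from Theorem~\ref{thm:full-three}.

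\emph{The division field.} For $y^2=x^3+Ax+B$ the $3$-division polynomial is
\[
 \psi_3=3x^4+6Ax^2+12Bx-A^2 .
\]
Substitute $A=-27t(t^3+24)$ and $B=54(t^6-60t^3-72)$. I expect $\psi_3$ to factor over $\Q(t)$ as a linear factor $x-x_0(t)$ times a cubic. The cubic should become, after an affine change $x=a(t)+b(t)z$, a polynomial of the shape $z^3-3c(t)^3$ or $z^3-3$. This would be checked by direct polynomial algebra, and the example is presumably a twist of a Hesse-type family, which is why such a factorization is expected.

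Granting the factorization, the following steps give $\Q(E_t[3])\subseteq\Q(\sqrt{-3},r)$.
\begin{enumerate}[label=(\alph*)]
\item The linear factor gives a rational $3$-subgroup $\langle P_0\rangle$.
\item The cubic has splitting field $\Q(\zeta_3,r)$, so the $x$-coordinates of the remaining three subgroups generate $\Q(\sqrt{-3},r)$.
\item The $y$-coordinates must also lie in this field. For this, I check that $f(x_0)$ is $-3$ times a square in $\Q(t)$, so that $P_0$ is defined over $K$ and the $y$-coordinate of $P_0$ lies in $K$.
\item For a second point $Q$ with $x(Q)$ a root of the cubic, I verify that $f(x(Q))$ is a square in $\Q(\sqrt{-3},r)$. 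This amounts to a norm computation in that field. Alternatively, note that $\Q(E_t[3])$ contains $\mu_3$ by the Weil pairing, and use the resulting constraints on the image in $\GL_2(\F_3)$.
\end{enumerate}

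For the reverse inclusion, the $x$-coordinates already generate $\Q(\sqrt{-3},r)$. Hence $\Q(E_t[3])=\Q(\sqrt{-3},r)$ for all $t$ outside a finite set where the factorization degenerates, and I will check whether such degenerate $t$ exist at all.

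\emph{Identification with $K_{\anti}^{(1)}$.} The field $\Q(\sqrt{-3},r)$ is a cyclic cubic extension of $K$. Since the discriminant of $x^3-3$ is $-243$, it is ramified only above $3$, so by Lemma~\ref{lem:ray-compositum} it is a cubic subextension of $M$. It is Galois over $\Q$ with group $S_3$, because $\Q(r)$ is not Galois. Therefore complex conjugation inverts $\Gal(\Q(\sqrt{-3},r)/K)$. As in the proof of Theorem~\ref{thm:two-slope}, the conjugation-stable index-$3$ subgroups of $\Gal(M/K)$ are the two eigenspaces. The cyclotomic layer is $K(\zeta_9)$, which is abelian over $\Q$, so our field must be $K_{\anti}^{(1)}$. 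The slope condition then follows from Theorem~\ref{thm:full-three} together with Lemma~\ref{lem:axis}.

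\emph{Non-CM members.} The $j$-invariant $j(t)=1728\cdot4A^3/(4A^3+27B^2)$ is a nonconstant rational function of $t$; I will check one coefficient to confirm this. A nonconstant rational function takes infinitely many rational values. Only thirteen rational $j$-invariants are CM, and only finitely many $t$ map to each of them, so infinitely many members of the family are non-CM.

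The main obstacle is the explicit factorization of $\psi_3$ and the verification that the $y$-coordinates introduce no extra quadratic extension. If the norm computation in step (d) becomes messy, I would instead argue group-theoretically. The mod-$3$ image over $K$ has determinant $1$ and fixes the $K$-rational line $\langle P_0\rangle$, so it lies in the unipotent subgroup, which has order $3$. Hence $K(E_t[3])$ is at most cubic over $K$ and equals the field generated by the $x$-coordinates.
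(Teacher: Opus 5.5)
Your outline is correct and tracks the paper's structure. One prediction is false, though nothing later depends on it. The cubic factor of $\psi_3$ is not affinely a pure cubic. In fact
\[
 \psi_3=3(x+9t^2)\bigl(z^3-1296tz-1728(3t^3+9)\bigr),\qquad z=x-3t^2 .
\]
Putting $z=12w$ gives $w^3-9tw-(3t^3+9)$, whose linear term is nonzero for $t\neq0$. Your step (b) is still true. The discriminant is $-243(t^3-3)^2$, and Cardano's radicands $-q/2\pm\sqrt{q^2/4+p^3/27}$ are the rationals $3t^3$ and $9$. This gives the root $w=tr+r^2$, i.e.\ $x=3t^2+12tr+12r^2$, which is exactly the $x$-coordinate of the paper's point $Q_t$. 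The coefficient of $r^2$ is nonzero, so the cubic is irreducible for every rational $t$. Also $f(-9t^2)=-432(t^3-3)^2$ is $-3$ times a nonzero square because $t^3\neq3$. Hence no degenerate parameters exist; at $t=0$ the curve is CM, but the statement still holds.

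With that correction, your route differs from the paper's in two places. The paper writes down both generators, $P_t\in E(K)$ and $Q_t\in E(\Q(r))$, including $y(Q_t)=36r(t^2+tr+r^2)$. So $\Q(E_t[3])\subseteq K(r)$ is immediate, and equality follows from $y(P_t)$ and the degree of $x(Q_t)$.

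Your fallback for step (d) needs no $y$-coordinate of $Q$. Since $\mu_3\subset K$ and $P_0\in E(K)$, the mod-$3$ image of $G_K$ lies in the unipotent subgroup. Then $[K(E_t[3]):K]\leq3$, and the $x$-coordinates alone force equality. Note that the argument needs the point $P_0$ itself to be fixed, not merely the line $\langle P_0\rangle$. Otherwise the image could contain $\begin{pmatrix}-1&*\\0&-1\end{pmatrix}$. Your step (c) supplies this. Use this argument instead of the norm computation.

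For the non-CM claim, the paper proves that $j(E_t)$ is nonintegral for integers $t$ prime to $3$, which exhibits explicit non-CM members. Your argument uses the finite fibres of the nonconstant function $j$ and the thirteen rational CM $j$-invariants. It is shorter and suffices for the statement. The identification with $K_{\anti}^{(1)}$ and the slope condition follow the paper.
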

\begin{proof}
The discriminant and $j$-invariant are
\begin{equation}\label{eq:hesse-invariants}
 \Delta(E_t)=241864704(t^3-3)^3,
 \qquad j(E_t)=\frac{9t^3(t^3+24)^3}{(t^3-3)^3}.
\end{equation}
Since $t^3\neq3$ for rational $t$, every $E_t$ is nonsingular. Consider
\[
 \begin{split}
 P_t&=(-9t^2,\;12(t^3-3)\sqrt{-3}),\\
 Q_t&=(3t^2+12tr+12r^2,\;36r(t^2+tr+r^2)).
 \end{split}
\]
Substitution using $r^3=3$ verifies the curve equation and
\[
 \psi_3(x)=3x^4+6Ax^2+12Bx-A^2=0
\]
at both $x$-coordinates, where $A,B$ are the coefficients in \eqref{eq:hesse-family}. Thus both points have order $3$. Their $x$-coordinates differ by $12(t^2+tr+r^2)\neq0$, so they are independent and generate $E_t[3]$.

The $y$-coordinate of $P_t$ generates $K$, while the $x$-coordinate of $Q_t$ has degree $3$ over $\Q$: its coefficient of $r^2$ is $12$, so it is not rational in the basis $1,r,r^2$. Hence the full division field is exactly $S=K(r)$. This is the splitting field of $x^3-3$; $S/K$ is cyclic cubic and unramified outside $3$. By Lemma~\ref{lem:ray-compositum}, $S\subset M$. Conjugation inverts $\Gal(S/K)$, giving $S=K_{\anti}^{(1)}$.

If $t$ is an integer prime to $3$, then $t^3-3$ is prime to $3$, to $t$, and to $t^3+24$. Its absolute value exceeds $1$. Thus \eqref{eq:hesse-invariants} is not an integer. A rational CM $j$-invariant is integral, so these curves are non-CM. The nonconstant rational function $j(E_t)$ has finite fibers, so these curves have infinitely many distinct $j$-invariants.
\end{proof}

For related families with prescribed $3$-torsion representations obtained from Hesse pencils, see \cite{Hesse}. The points above give the division field of \eqref{eq:hesse-family} directly.

\begin{example}\label{ex:noncm-three}
At $t=1$ we obtain
\[
 E:y^2=x^3-675x-7074,
 \qquad j(E)=-140625/8.
\]
A minimal equation has coefficients $[1,-1,0,-42,-100]$ and conductor $162$, and
$E(K)_{\tors}=C_3$ and $E(K_{\anti}^{(1)})_{\tors}=C_3^2$.
The nonintegral $j$-invariant shows that $E$ is non-CM. Its negative valuation above $2$ also gives bad reduction there after base change to $K$. Thus $E[3]$ is unramified outside $3$ over $K$, although $E$ has bad reduction at $2$.
\end{example}

\subsection{A uniform bound for non-CM \texorpdfstring{$3$}{3}-primary torsion}

\begin{theorem}\label{thm:noncm-three-bound}
Let $K$ be any quadratic field and let $E/\Q$ be non-CM. Then
\[
 E(\widetilde K_3)[3^\infty]\subset E[3^6].
\]
Consequently, for every $K\subset L\subset\widetilde K_3$,
\[
 E(L)[3^\infty]
 =E(L\cap K(E[3^6]))[3^6].
\]
\end{theorem}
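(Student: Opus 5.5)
We apply Lemma~\ref{lem:commutator} with $q=3$ and $a=3$, taking the abelian extension to be $\widetilde K_3$ over $K$. The work is to show that for non-CM $E/\Q$ the image of $G_K$ on $T_3(E)$ contains $U$ and $V$ with $t=27$. Then Lemma~\ref{lem:commutator} gives $E(\widetilde K_3)[3^\infty]\subset E[3^6]$. The second assertion follows by applying Lemma~\ref{lem:coprime}-style reasoning directly: every point of $E(L)[3^\infty]$ lies in $E[3^6]$, so its coordinates lie in $L\cap K(E[3^6])$.

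The input is a uniform bound on the index of the $3$-adic image for non-CM rational curves. By Rouse--Sutherland--Zureick-Brown's classification of $\ell$-adic images (or earlier work on $3$-adic images), the $3$-adic image $G=\rho_{E,3^\infty}(G_\Q)$ contains the principal congruence subgroup $I+3^{k}M_2(\Z_3)$ for some uniform $k$. Passing to $G_K$ changes the image by index at most $2$. Since $I+3^kM_2(\Z_3)$ is pro-$3$, the image of $G_K$ still contains $I+3^kM_2(\Z_3)$: a pro-$3$ group has no index-$2$ subgroup. So it suffices to know that every non-CM rational curve has $3$-adic image containing $I+27M_2(\Z_3)$, that is, level dividing $27$ in the relevant sense. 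Then $U$ and $V$ with $t=27$ lie in the image, and the commutator bound yields the exponent $2a=6$.

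The main obstacle is justifying the uniform level $27$. I would first quote the RSZB result that the $3$-adic image of a non-CM $E/\Q$ is determined at level at most $27$, meaning it is the full preimage of its reduction mod $27$. If only a cruder statement is available, I would argue from the known list: the maximal $3$-power isogeny degree is $27$ by \eqref{eq:isogeny-list}, and the known exceptional images have level $\le 27$. Full preimage at level $27$ is exactly the containment of $\ker(\GL_2(\Z_3)\to\GL_2(\Z/27))$. I would double-check that the commutator computation in Lemma~\ref{lem:commutator} only needs lifts of $U,V$ in $G_K$, which holds because the abelian extension kills commutators in $G_K$.
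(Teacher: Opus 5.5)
Your closing steps are fine. The pro-$3$ congruence subgroup does survive passage to the index-$\le 2$ subgroup $\rho(G_K)$. Lemma~\ref{lem:commutator} with $t=27$ then gives exponent $3^6$, and the second assertion follows by intersecting coordinate fields.

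\textbf{The gap.} The gap is the input to all of this. You need $I+27M_2(\Z_3)\subset\rho_{E,3^\infty}(G_\Q)$ for \emph{every} non-CM $E/\Q$, and Rouse--Sutherland--Zureick-Brown do not prove that uniformly. Their classification of $3$-adic images leaves open the case in which the image lies in the normalizer of a nonsplit Cartan subgroup modulo $27$. The level-$27$ conclusion is only available outside that case. Unless you can cite a result settling it, your key claim is unsupported.

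Your fallback does not close the gap. The bound on rational $3$-power isogeny degrees controls only images with a $G_\Q$-stable line. A nonsplit Cartan image has no such line, so the bound says nothing about the problematic case. Appealing to the ``known'' images is circular there.

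\textbf{The missing idea.} The missing idea is a case split, which the paper uses, so that the level-$27$ statement is only ever needed for curves with a rational $3$-isogeny. For those curves, RSZB give the containment of $I+27M_2(\Z_3)$.
\begin{itemize}
\item If $E(\widetilde K_3)[3]$ has rank at most one, then $E(\widetilde K_3)[3^n]$ is cyclic. It is $G_\Q$-stable because $\widetilde K_3/\Q$ is Galois. It therefore gives a rational cyclic $3^n$-isogeny, and \eqref{eq:isogeny-list} forces $n\le3$. No image theorem is needed in this case.
\item If $E[3]\subset E(\widetilde K_3)$, then $K(E[3])/K$ is a $3$-extension. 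So the mod-$3$ image of $G_K$ is either trivial or unipotent of order $3$.
\begin{itemize}
\item If it is unipotent, its unique fixed line is $G_\Q$-stable because $G_K$ is normal in $G_\Q$.
\item If it is trivial, the image of $G_\Q$ has order at most $2$ and is diagonalizable over $\F_3$.
\end{itemize}
Either way $E$ has a rational $3$-isogeny. From there your argument goes through unchanged.
\end{itemize}
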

\begin{proof}
If $E(\widetilde K_3)[3]$ has rank at most one, a point of order $3^n$ generates a $G_\Q$-stable cyclic subgroup, as in Lemma~\ref{lem:cyclic-character}. The isogeny list \eqref{eq:isogeny-list} forces $n\leq3$.

Otherwise the mod-$3$ image of $G_K$ is trivial or has order $3$. In the nontrivial case its unique fixed line is stable under $G_\Q$, since $G_K$ is normal. In the trivial case the image of $G_\Q$ has order at most $2$ and is diagonalizable over $\F_3$. In both cases, $E$ has a rational cyclic $3$-isogeny.

By \cite[Corollary~1.1]{RSZB}, the $3$-adic image of a non-CM rational curve with a rational $3$-isogeny contains every matrix congruent to $I$ modulo $27$. This pro-$3$ subgroup also lies in the image of $G_K$, which is normal of index at most $2$. Apply Lemma~\ref{lem:commutator} with $t=27$ and the abelian extension $\widetilde K_3/K$. Intersecting coordinate fields gives the last equality.
\end{proof}

Together with Proposition~\ref{prop:odd-three}, \eqref{eq:twist}, and Theorem~\ref{thm:two-slope}, this bound shows that $E_t(M)_{\tors}$ is finite for every non-CM member of the family in Theorem~\ref{thm:hesse-family}.

\subsection{Torsion on slopes for eight CM curves}

Table~\ref{tab:cm-three} lists the eight CM curves considered in this subsection. We use the notation $[a_1,a_2,a_3,a_4,a_6]$ for their Weierstrass coefficients.

\begin{table}[htbp]
\centering
\caption{CM curves with full $3$-power torsion over $M$.}\label{tab:cm-three}
\begin{tabular}{clrr}
\toprule
Type & Coefficients & Conductor over $\Q$ & $j$\\
\midrule
A & $[0,0,1,0,0]$ & $27$ & $0$\\
A & $[0,0,1,0,-7]$ & $27$ & $0$\\
B & $[0,0,1,-30,63]$ & $27$ & $-12288000$\\
B & $[0,0,1,-270,-1708]$ & $27$ & $-12288000$\\
C & $[0,0,1,0,-1]$ & $243$ & $0$\\
C & $[0,0,1,0,2]$ & $243$ & $0$\\
C & $[0,0,1,0,20]$ & $243$ & $0$\\
C & $[0,0,1,0,-61]$ & $243$ & $0$\\
\bottomrule
\end{tabular}
\end{table}

\begin{theorem}[CM torsion on every slope]\label{thm:cm-slopes}
For each curve in Table~\ref{tab:cm-three},
\[
 E(M)[3^\infty]=E[3^\infty]\simeq(\Q_3/\Z_3)^2.
\]
For every slope $[\alpha:\beta]$, the group $E(L_{\alpha,\beta})[3^\infty]$ is finite and is given by Table~\ref{tab:cm-slopes}. The congruence conditions are unchanged if either chosen generator is rescaled by a unit.
\end{theorem}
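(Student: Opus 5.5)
The first assertion, $E(M)[3^\infty]=E[3^\infty]$, comes directly from Corollary~\ref{cor:cm-sufficient} with $(K,p)=(\Q(\sqrt{-3}),3)$. The curves of type A and C have $j=0$, so they have CM by $\Z[\zeta_3]\subset K$. The curves of type B have $j=-12288000$, which is CM by the order of conductor $3$ in $K$. All of them have conductor a power of $3$ over $\Q$, so they have good reduction over $K$ at every prime not above $3$. The corollary then gives $E[3^\infty]\subset E(M)$. The isomorphism with $(\Q_3/\Z_3)^2$ is standard.

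For the slope computation I will use the $3$-adic character. Over $K$, the Tate module $T_3(E)$ is a free rank-one module over $\mathcal O\otimes\Z_3$, where $\mathcal O$ is the CM order. Hence $G_K$ acts through a continuous character
\[
\rho:\Gal(M/K)\simeq\Z_3^2\to(\mathcal O\otimes\Z_3)^\times .
\]
Its image is a $3$-adic analytic group of rank two; it is open but need not be torsion-free a priori. Since $\rho$ factors through $\Gal(M/K)$, the fixed points of $\ker\varphi_{\alpha,\beta}=\Z_3(\beta\gamma_c-\alpha\gamma_a)$ on $E[3^\infty]$ are exactly
\[
E(L_{\alpha,\beta})[3^\infty]=E[3^\infty]^{\,g},\qquad g=\rho(\gamma_c)^{\beta}\rho(\gamma_a)^{-\alpha}.
\]
This group is the kernel of $g-1$ on $(K_3/\mathcal O_3)$, suitably identified, and its structure is $\mathcal O_3/(g-1)$. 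It is finite exactly when $g\neq1$, and $g\neq 1$ follows from injectivity of $\rho$. Injectivity holds because $E[3^\infty]$ generates $M$. That in turn follows by comparing the field cut out by $\rho$, which is unramified outside $3$ and abelian, with the fact that the image of $\rho$ has $\Z_3$-rank two.

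The key computation is to determine $\rho(\gamma_c)$ and $\rho(\gamma_a)$ explicitly, say modulo $\sqrt{-3}^{\,N}$ for small $N$. The valuation of $g-1$ is governed by low-order terms, and these determine the congruence classes in Table~\ref{tab:cm-slopes}.

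To pin down $\rho$, I will use that $\rho$ is the $3$-adic avatar of the Hecke character $\psi_E$ of $K$. Via the class field theory isomorphism of Lemma~\ref{lem:ray-compositum}, $\Gal(M/K)\simeq\mathcal O_{K_3}^\times/\mu_6$, and $\rho$ becomes $u\mapsto\chi(u)\,u^{-1}$. Here $\chi$ is a finite-order character of $\mathcal O_{K_3}^\times$ valued in $\mu_6$. It depends on the twist: it is trivial-ish for the conductor-$27$ curves and involves a cubic character of conductor $9$ for the conductor-$243$ curves. For type B, the restriction to the units of the order of conductor $3$ is needed.

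Next I choose $\gamma_c$ and $\gamma_a$ as explicit units, namely topological generators of the $\pm$ eigenspaces of conjugation, for instance $\gamma_c\leftrightarrow 4$ and $\gamma_a\leftrightarrow$ a unit of norm one congruent to $1$ mod $3$. Then $v_{\sqrt{-3}}(g-1)$ can be read off from expansions in $\pi=\sqrt{-3}$. Cases then split according to $\val_3(\alpha)$ and $\val_3(\beta)$ and a residue condition.

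For the remark about rescaling: replacing $\gamma_c$ by $u\gamma_c$ with $u\in\Z_3^\times$ replaces $\alpha$ by $u^{-1}\alpha$. This preserves valuations and the classes that matter, which I will check against the table conditions.

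The main obstacle is the exact identification of the finite-order twist $\chi$ for each of the eight curves. This is equivalent to knowing $E[3]$ and $E[9]$ as Galois modules over $K$. I would first determine it from point counts: $a_\ell$ for small split primes $\ell\equiv1\pmod 3$ gives $\psi_E(\lambda)$, and hence $\chi$ on generators. I would then cross-check the smallest layers by directly computing $E(K_{\cyc}^{(1)})[3^\infty]$ and $E(K_{\anti}^{(1)})[3^\infty]$.
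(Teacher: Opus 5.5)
Your skeleton matches the paper's: Corollary~\ref{cor:cm-sufficient} for the first assertion, the CM character on $\Gal(M/K)\simeq\mathcal O_{K_3}^\times/\mu_6\simeq U_2$, and $E(L_{\alpha,\beta})[3^\infty]$ as the cokernel of $g-1$ on the CM lattice. Your rank argument for injectivity and finiteness is fine.

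\textbf{The gap is type~B.} You plan to read the answer off $v_\pi(g-1)$. That works only when the lattice is the discrete valuation ring $\Z_3[\pi]$, where $\Z_3[\pi]/(g-1)\simeq\Z_3[\pi]/\pi^{v}$. For type~B the lattice is $\mathcal O'_3=\Z_3+3\Z_3[\pi]$, which is not a discrete valuation ring, and the quotient is not a function of the valuation.

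The difference also does not lie in $\chi$, as your remark about ``restriction to the units of the order'' suggests. The rational $3$-isogeny $E_0\to E_1$ identifies $V_3(E_0)$ with $V_3(E_1)$ Galois-equivariantly, so both curves have the same character into $K_3^\times$. Only the lattice changes.

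Write $g-1=3(A+B\pi)$ with $A\equiv c\beta$ and $B\equiv -a\alpha\pmod 3$ for units $c,a$. In the basis $1,3\pi$ of $\mathcal O'_3$, multiplication by $g-1$ is $\begin{pmatrix}3A&-27B\\B&3A\end{pmatrix}$.
\begin{itemize}
\item Slopes with $\alpha\equiv0$ and slopes with $\alpha\beta\in\Z_3^\times$ both have $v_\pi(g-1)=2$, yet the groups are $C_3^2$ and $C_9$. What decides is whether $g-1\in3\mathcal O'_3$, i.e.\ whether $3\mid B$.
\item Slopes with $\beta\equiv0$ have $v_\pi(g-1)=3$, but the group is $C_{27}$, not $C_3\oplus C_9$.
\end{itemize}
So for type~B you must compute Smith invariants over $\mathcal O'_3$. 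Your local-freeness claim supplies $T_3(E_1)\simeq\mathcal O'_3$ up to a scalar.

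\textbf{The identification of $\chi$ is unnecessary, but as written nothing is derived.} You call the curve-by-curve identification of $\chi$ the main obstacle and leave it undone, so no row of Table~\ref{tab:cm-slopes} is actually obtained. The paper avoids that computation entirely.
\begin{itemize}
\item $U_2$ acts trivially on $E[3]$, and $\mu_3$ injects into $(\mathcal O/3\mathcal O)^\times$. Hence $E[3]$ alone determines $\chi|_{U_2}$; $E[9]$ is not needed.
\item For type~A, $E[3]\subset E(K)$, so $\chi|_{U_2}=1$.
\item For type~C, the factor $x^3+4b$ of the $3$-division polynomial does not split over $K$, so $\chi|_{U_2}\neq1$.
\item Since $E$ is defined over $\Q$, $\chi(\bar u)=\chi(u)^{-1}$. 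A character $U_2\to\mu_3$ with this property kills the plus direction $1+3\Z_3$. If it is nontrivial, it is nontrivial on the norm-one direction.
\end{itemize}
This fixes the kernel of $\chi|_{U_2}$ without any point counts. The type~C row follows: $v_\pi(g-1)=1$ when $\alpha$ is a unit, because $v_\pi(\zeta_3-1)=1$, and $v_\pi(g-1)=2$ when $3\mid\alpha$.

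Your explicit Hecke-character route could be completed, since the conductor confines $\chi$ to a finite quotient detected by Frobenius at split primes. It is just heavier.

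Finally, $\gamma_a$ must be a \emph{topological generator} of the norm-one part of $U_2$, with logarithm in $3\pi\Z_3^\times$. Being of norm one and $\equiv1\pmod 3$ is not enough.
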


\begin{table}[htbp]
\centering
\caption{Groups $E(L_{\alpha,\beta})[3^\infty]$ for the curves in Table~\ref{tab:cm-three}.}\label{tab:cm-slopes}
\begin{tabular}{cccc}
\toprule
Type & $\beta\equiv0\pmod3$ & $\alpha\equiv0\pmod3$ & $\alpha\beta\in\Z_3^\times$\\
\midrule
A & $C_3\oplus C_9$ & $C_3^2$ & $C_3^2$\\
B & $C_{27}$ & $C_3^2$ & $C_9$\\
C & $C_3$ & $C_3^2$ & $C_3$\\
\bottomrule
\end{tabular}
\end{table}

\begin{proof}
The curves of types A and C have CM by $\Z[\zeta_3]$; those of type B have CM by the order of conductor $3$. Their models have discriminants supported at $3$, so Corollary~\ref{cor:cm-sufficient} gives $E[3^\infty]\subset E(M)$.

Put $\pi=\sqrt{-3}$ and $\mathcal O=\Z_3[\pi]$, so $\pi^2=-3$. There is a direct product
\begin{equation}\label{eq:local-three}
 \mathcal O^\times=\mu_6\times U_2,
 \qquad U_2=1+3\mathcal O,
 \qquad \log U_2=3\mathcal O.
\end{equation}
The Weber-function form of the main theorem of complex multiplication \cite[Theorem~6.15]{CohenStevenhagen}, says that the division fields modulo global automorphisms generate the ray class fields. Thus for a maximal-order curve the projection of its CM image to $\mathcal O^\times/\mu_6\simeq U_2$ is surjective. As all division points lie in $M$, this projection is a surjective homomorphism from $\Gal(M/K)\simeq\Z_3^2$ to $U_2\simeq\Z_3^2$, hence an isomorphism. It respects conjugation.

Consider first the type A curve $E_0=[0,0,1,0,0]$. Its full $3$-torsion is defined over $K$, as follows from the division polynomial $3x(x^3+1)$ and the corresponding $y$-coordinates. Its image therefore lies in $1+3\mathcal O$ and is exactly $U_2$. Identifying $T_3(E_0)$ with $\mathcal O$, choose units $c,a\in\Z_3^\times$ so that the logarithms of the images of $\gamma_c,\gamma_a$ are $3c,3a\pi$, respectively. If $h=\beta\gamma_c-\alpha\gamma_a$ generates $\Gal(M/L)$, then
\[
 \log\rho(h)=3(c\beta-a\alpha\pi),
 \qquad \rho(h)-1=3(A+B\pi),
\]
where
\begin{equation}\label{eq:AB-congruences}
 A\equiv c\beta\pmod3,\qquad B\equiv-a\alpha\pmod3.
\end{equation}
The congruences follow from the convergent exponential series in $3\mathcal O$.

In the basis $1,\pi$, multiplication by $\rho(h)-1$ is
\begin{equation}\label{eq:max-matrix}
 \begin{pmatrix}3A&-9B\\3B&3A\end{pmatrix},
 \qquad \det=9(A^2+3B^2).
\end{equation}
If $\beta$ is a unit, $A$ is a unit, so the Smith invariants are $3,3$. If $\beta$ is divisible by $3$, then $\alpha$ and $B$ are units; the determinant has $3$-valuation $3$ and the Smith invariants are $3,9$. The fixed subgroup of $(\Q_3/\Z_3)^2$ under $h$ is isomorphic to the cokernel of this matrix. This gives the type A row. The second type A curve is the quadratic twist of $E_0$ by $-3$ and is isomorphic to it over $K$.

For type B, the rational $3$-isogeny with kernel polynomial $x+1$ on $E_0$ has quotient
\[
 E_1=[0,0,1,-30,63].
\]
The dual isogeny embeds $T_3(E_1)$ into $T_3(E_0)$ with index $3$. Its multiplier order is
\[
 \mathcal O'=\Z_3+3\mathcal O=\Z_3+\Z_3(3\pi).
\]
An index-three lattice in $\mathcal O$ is either the $\mathcal O$-ideal $\pi\mathcal O$, or the inverse image of a nonideal line in $\mathcal O/3\mathcal O$. The three nonideal lines are permuted transitively by $\mathcal O^\times$. The former lattice has maximal multiplier order and is excluded here. Thus, up to a scalar commuting with the CM action, the lattice is $\mathcal O'$. In the basis $1,3\pi$, the matrix becomes
\begin{equation}\label{eq:nonmax-matrix}
 \begin{pmatrix}3A&-27B\\B&3A\end{pmatrix}.
\end{equation}
Its determinant is the same as in \eqref{eq:max-matrix}. If $\alpha\equiv0$ and $\beta$ is a unit, the Smith invariants are $3,3$. If both coordinates are units, they are $1,9$. If $\beta\equiv0$ and $\alpha$ is a unit, they are $1,27$. This gives the type B row. The other type B curve is again a quadratic twist by $-3$.

For each type C curve, projection modulo $\mu_6$ again identifies the projective image with $U_2$. The CM image therefore has the form
\begin{equation}\label{eq:graph-character}
 \{\theta(v)v:v\in U_2\},
 \qquad \theta:U_2\longrightarrow\mu_3.
\end{equation}
There is no $\mu_2$ component because the image is pro-$3$. After completing the square, write a type C curve as $y^2=x^3+b$. The values of $-4b$ are $3,-9,-81,243$. None is a cube in the quadratic field $K$, so the factor $x^3+4b$ of its $3$-division polynomial does not split over $K$. Hence full $3$-torsion is not defined over $K$, and $\theta$ is nontrivial: otherwise the image would be $U_2$, which acts trivially on $E[3]$.

Compatibility with conjugation gives $\theta(\bar v)=\theta(v)^{-1}$. Under the logarithm in \eqref{eq:local-three}, $\theta$ is therefore trivial on the plus direction $3\Z_3$ and nontrivial on the minus direction $3\pi\Z_3$ modulo $9\pi\Z_3$. Writing $v(h)$ for the projection of $\rho(h)$ to $U_2$, we obtain
\[
 \theta(v(h))=1\quad\Longleftrightarrow\quad\alpha\equiv0\pmod3.
\]
If $\alpha$ is a unit, the element $\theta(v(h))v(h)-1$ has $\pi$-valuation $1$, because $v(h)-1\in3\mathcal O$ and a nontrivial cube root of unity differs from $1$ by an element of valuation $1$. The fixed group is therefore $\mathcal O/\pi\mathcal O\simeq C_3$. If $\alpha\equiv0$, then $\beta$ is a unit and $\theta(v(h))=1$; the valuation is $2$ and the group is $\mathcal O/3\mathcal O\simeq C_3^2$. This proves the type C row.

In each case $\rho(h)-1$ has nonzero determinant. Since $h$ topologically generates $\Gal(M/L)$, its fixed subgroup is $E(L)[3^\infty]$. This proves the table.
\end{proof}

\begin{remark}[Finite-layer checks]\label{rem:cm-finite-checks}
The torsion groups over the number fields used in the examples and the first CM layers were checked with PARI/GP~2.15.4. For the curves in Table~\ref{tab:cm-three}, the first two cyclotomic layers are $K_{\cyc}^{(1)}=\Q(\zeta_9)$ and $K_{\cyc}^{(2)}=\Q(\zeta_{27})$, with defining polynomials $X^6+X^3+1$ and $X^{18}+X^9+1$, respectively. The first anticyclotomic layer is $K(\sqrt[3]{3})$, as proved in Theorem~\ref{thm:hesse-family}. A mixed cubic layer is $K(\sqrt[3]{3\zeta_3})$, which has defining polynomial $X^6+3X^3+9$ over $\Q$. This extension of $K$ is cyclic and unramified outside $3$, so it lies in $M$ by Lemma~\ref{lem:ray-compositum}; it is distinct from the first layers of both $K_{\cyc}$ and $K_{\anti}$. The computed groups agree with Table~\ref{tab:cm-slopes}, whose proof determines the torsion on the full extensions from the local CM action.
\end{remark}

\begin{remark}\label{rem:cm-noncyc-27}
For type B, $E(L_{1,3})[3^\infty]=C_{27}$, whereas $L_{1,3}\cap K_{\cyc}=K_{\cyc}^{(1)}$. Thus the hypothesis $K\neq\Q(\sqrt{-3})$ in Proposition~\ref{prop:odd-three} is necessary. For types B and C, full $3$-torsion occurs precisely when $\alpha\equiv0\pmod3$, in accordance with Theorem~\ref{thm:full-three}.
\end{remark}

\section{Torsion in the compositum of \texorpdfstring{$\Z_2$}{Z2}-extensions}\label{sec:two}

Let $K$ be any quadratic field and put $F=\widetilde K_2$. Then $F/\Q$ is Galois and pro-$2$. The proofs below use these properties and the ramification bound \eqref{eq:inertia-bound}.

\subsection{Odd primes at least \texorpdfstring{$5$}{5}}

\begin{lemma}[Quadratic twists of cyclic torsion]\label{lem:two-character}
Let $q\geq5$ be prime and let $P\in E(F)$ have order $q^n$. There is a rational quadratic twist $E^d$ having a point of order $q^n$ over $\Q_{\infty,2}$.
\end{lemma}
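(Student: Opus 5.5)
First I show that the $q$-part of $E(F)$ is cyclic. If $E(F)[q]=E[q]$, the Weil pairing would put $\mu_q$ in $F$. Since $q\neq2$, this contradicts \eqref{eq:inertia-bound}, because $e_q(\Q(\zeta_q)/\Q)=q-1\geq4>2$. Hence $E(F)[q^\infty]$ is cyclic and $E(F)[q^n]=\langle P\rangle$. As $F/\Q$ is Galois, this subgroup is $G_\Q$-stable, and $G_\Q$ acts on it through a character
\[
 \chi:\Gal(N/\Q)\longrightarrow(\Z/q^n\Z)^\times .
\]
Here $N$ is a finite Galois subfield of $F$ containing $P$. The image of $\chi$ is a quotient of a group that is an extension of $\Gal(K/\Q)$ by a $2$-group, so it is a $2$-group. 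Since $(\Z/q^n\Z)^\times$ is cyclic, the image is cyclic of $2$-power order, and the field $A$ cut out by $\chi$ is a cyclic $2$-power extension of $\Q$ contained in $F$.

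Next I identify $A$. The extension $A/\Q$ is abelian and unramified outside $2$ and the primes ramified in $K$. At every odd prime $\ell$, \eqref{eq:inertia-bound} bounds the ramification index by $2$. By Kronecker--Weber, $A$ lies in a cyclotomic field, so $A$ corresponds to a cyclic $2$-power quotient of $(\Z/m\Z)^\times$ with $m=2^k\ell_1\cdots\ell_s$. Because $A$ is cyclic, the inertia groups generate a cyclic group. Each odd inertia group has order at most $2$, and the $2$-adic inertia group is a quotient of $(\Z/2^k)^\times$. From this I want to show that $A\subset \Q(\sqrt d)\,\Q_{\infty,2}$ for some squarefree $d$, where $\Q_{\infty,2}$ is the cyclotomic $\Z_2$-extension.

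I expect this to be the main obstacle. A cyclic $2$-power extension need not sit inside such a compositum: a cyclic quartic field ramified at an odd prime $\ell\equiv1\pmod4$ has tame inertia of order $4$, which \eqref{eq:inertia-bound} excludes, but mixed $2$-adic and odd contributions still need care. I would try to reduce to the structure of $\Gal(F/\Q)$ directly. Since $F/K$ is abelian and pro-$2$, the abelian quotient of $\Gal(F/\Q)$ through which $\chi$ factors has a specific shape. For real $K$, $F=K\Q_{\infty,2}$ and the claim is immediate. For imaginary $K$, I would use that conjugation acts on $\Gal(F/K)$. Any character factoring through the abelianization restricts on $\Gal(F/K)$ to something killing $(\sigma-1)\Gal(F/K)$. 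The quotient of $\Gal(F/K)$ by this is the cyclotomic direction times a group of exponent at most $2$. Therefore $\chi|_{G_K}$ factors through $\Gal(K\Q_{\infty,2}/K)$ up to a quadratic character, and $\chi=\psi\cdot\epsilon$ with $\psi$ factoring through $\Gal(\Q_{\infty,2}/\Q)$ and $\epsilon$ quadratic.

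Finally I twist. Write $\epsilon$ as the character of $\Q(\sqrt d)$ and replace $E$ by $E^d$. The twist multiplies the action on the corresponding subgroup of $E^d[q^n]$ by $\epsilon$, so the Galois action on it becomes $\psi$. Here I use that $-1$ acts as $\epsilon$ through the twisting isomorphism, and that $\epsilon^2=1$. Thus $G_{\Q_{\infty,2}}$ fixes a generator, and $E^d$ has a point of order $q^n$ over $\Q_{\infty,2}$.
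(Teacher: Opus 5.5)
Your proof ends up taking a different route from the paper's, and the route is workable. As written, though, it has a gap at its final step.

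\textbf{Comparison with the paper.} The paper completes the Kronecker--Weber approach that you set aside in your second paragraph. Write $\chi$ as a Dirichlet character $\prod_\ell\chi_\ell$. Inertia at an odd $\ell$ is exactly the factor $(\Z/\ell^k\Z)^\times$ of $\Gal(\Q(\mu_m)/\Q)$, so \eqref{eq:inertia-bound} gives each odd component $\chi_\ell$ order at most $2$. Being of $2$-power order, $\chi_\ell$ is tame, hence trivial or the Legendre character. Multiplying by one quadratic character removes all of these. What remains is a character of $\Z_2^\times=\{\pm1\}\times(1+4\Z_2)$, i.e.\ a power of the character of $\Q(i)$ times a character of $\Gal(\Q_{\infty,2}/\Q)$. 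So the ``mixed $2$-adic and odd contributions'' you worried about never interact.

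The two arguments rest on different inputs:
\begin{itemize}
\item The paper uses only that $F/\Q$ is pro-$2$ Galois with odd ramification indices at most $2$.
\item You use the $\Z_2[C_2]$-module structure of $\Gamma=\Gal(F/K)$, plus $F=K\Q_{\infty,2}$ for real $K$. You need the inertia bound only for cyclicity.
\end{itemize}

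\textbf{The gap.} It sits in the final ``and'', where you pass from $\chi|_{G_K}=\psi_K\epsilon_K$ to $\chi=\psi\epsilon$ with $\epsilon$ quadratic. Let $\psi$ be the character of $\Gal(\Q_{\infty,2}/\Q)$ restricting to $\psi_K$; this uses $K\cap\Q_{\infty,2}=\Q$. Then $\xi=\chi\psi^{-1}$ is only known to have order at most $2$ on $G_K$. So $\xi^2$ factors through $\Gal(K/\Q)$ and could a priori be the character $\omega_K$ of $K$. In that case $\xi$ would be quartic and your twist would not exist.

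You must rule this out. For imaginary $K$, evaluate at complex conjugation $c$: $\xi^2(c)=\xi(c^2)=1$, whereas $\omega_K(c)=-1$. Equivalently, the quadratic subfield of a cyclic quartic field is real.

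\textbf{Unproved structural claim.} You should also justify the shape of $\Gamma/(\sigma-1)\Gamma$. Put $\Gamma^-=\ker(\sigma+1)$. Then:
\begin{itemize}
\item $2\Gamma^-\subset(\sigma-1)\Gamma\subset\Gamma^-$.
\item $\Gamma^-$ has rank one, since $\sigma\neq\pm1$ on $\Gamma$.
\item $\Gal(F/K\Q_{\infty,2})$ is a saturated rank-one subgroup containing $(\sigma-1)\Gamma$, because $K\Q_{\infty,2}/\Q$ is abelian. Hence it equals $\Gamma^-$.
\end{itemize}
With these two additions your argument proves the lemma.
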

\begin{proof}
The bound \eqref{eq:inertia-bound} excludes $\mu_q$ from $F$, so $E(F)[q^n]=\langle P\rangle$ is cyclic and $G_\Q$-stable. Let
\[
 \psi:G_\Q\longrightarrow(\Z/q^n\Z)^\times
\]
be its character. Its image is a $2$-group, because $\Q(P)\subset F$. By Kronecker--Weber, it is a Dirichlet character. For every odd prime $\ell$, its restriction to inertia has order at most $2$, by \eqref{eq:inertia-bound}. The local Dirichlet component at $\ell$ therefore has order at most $2$; it is trivial on the odd-order wild units and is either trivial or the quadratic character at $\ell$.

Let $\eta_0$ be the product of the inverses of the odd conductor components. It is a quadratic character, and $\eta_0\psi$ has conductor a power of $2$. The maximal abelian pro-$2$ extension of $\Q$ with such conductor is
\[
 \Q(\mu_{2^\infty})=\Q_{\infty,2}(i).
\]
Its Galois group is $C_2\times\Z_2$. A further twist by the quadratic character of $\Q(i)$, if necessary, removes the $C_2$ component: an element of order $2$ in $(\Z/q^n\Z)^\times$ is necessarily $-1$. Thus $\eta\psi$ cuts out a subfield of $\Q_{\infty,2}$ for a quadratic character $\eta$ of $\Q$. On the twist $E^d$ associated to $\eta$, the corresponding cyclic subgroup has character $\eta\psi$, yielding the required point.
\end{proof}

\begin{theorem}\label{thm:two-odd-large}
For every quadratic $K$,
\[
 E(F)[5^\infty]\in\{0,C_5\},\qquad
 E(F)[7^\infty]\in\{0,C_7\},
\]
and $E(F)[q^\infty]=0$ for every prime $q\geq11$.
\end{theorem}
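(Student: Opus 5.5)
The plan is to reduce everything to torsion of quadratic twists over the cyclotomic $\Z_2$-extension, and then apply \cite{CDKN}. Fix a prime $q\geq5$ and suppose $E(F)[q^\infty]\neq0$. The bound \eqref{eq:inertia-bound} excludes $\mu_q$ from $F$, so by the Weil pairing $E(F)[q^\infty]$ is cyclic, say $\langle P\rangle$ with $P$ of order $q^n$. Lemma~\ref{lem:two-character} then produces a rational quadratic twist $E^d$ with a point of order $q^n$ over $\Q_{\infty,2}$. By \cite{CDKN}, torsion over $\Q_{\infty,2}$ lies in Mazur's list. Hence $q^n\in\{5,7\}$, which gives $E(F)[5^\infty]\in\{0,C_5\}$ and $E(F)[7^\infty]\in\{0,C_7\}$. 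It also shows $q\geq11$ cannot occur, because Mazur's list contains no point of order $11$ or larger prime order.

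The only real content is already in Lemma~\ref{lem:two-character}, namely building the twisting character $\eta$. At this stage I would just make two checks.

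First, the twist relation is correct. On $E^d$ the Galois action on the image of $\langle P\rangle$ is multiplied by $\eta$. Then $\eta\psi$ factoring through $\Gal(\Q_{\infty,2}/\Q)$ means the point is fixed by $G_{\Q_{\infty,2}}$, so it is rational over $\Q_{\infty,2}$.

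Second, the character really cuts out a field in $\Q_{\infty,2}$. Since $\psi$ takes values in a cyclic group $(\Z/q^n\Z)^\times$, the twisted character $\eta\psi$ is a character of $\Gal(\Q(\mu_{2^\infty})/\Q)\simeq C_2\times\Z_2$. A character of this group with cyclic $2$-power image whose restriction to the $C_2$ factor is trivial factors through $\Z_2$.

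As a sanity check on the primes $13$ and $17$, which were problematic in \cite{Li}: they have $(\Z/q\Z)^\times$ of orders $12$ and $16$, so $2$-power characters exist. The point is that no rational curve over $\Q_{\infty,2}$ carries such torsion, and that is exactly what \cite{CDKN} provides. So no isogeny-list argument is needed here.

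I expect the main obstacle to be whether \cite{CDKN} applies to arbitrary twists uniformly, since $E^d$ is again an elliptic curve over $\Q$. It does, as that classification is for all $E/\Q$ over $\Q_{\infty,2}$. If instead one wanted to avoid citing \cite{CDKN} for $n\geq2$, the fallback would be to note that a rational cyclic $q^n$-isogeny arises from $\langle P\rangle$. Then \eqref{eq:isogeny-list} excludes $q^n\geq11$ except $13,17,19,25,37,43,67,163$, and these would have to be handled by the cyclotomic classification anyway. So the cleanest route is the twist plus \cite{CDKN}.
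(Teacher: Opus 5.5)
Your proposal is correct and follows the paper's own proof. You use Lemma~\ref{lem:two-character} to move a point of order $q^n$ to a quadratic twist over $\Q_{\infty,2}$, and then \cite{CDKN}, whose possible groups are those of Mazur's list, rules out points of order $25$, $49$, or any prime $q\geq11$; the Weil pairing gives cyclicity. (Your aside on the isogeny fallback omits $11$, which is an admissible rational isogeny degree, but that remark plays no role in the argument.)
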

\begin{proof}
Apply Lemma~\ref{lem:two-character} and the $p=2$ theorem of \cite{CDKN}. Its possible groups have no odd prime factors outside $3,5,7$, and no points of order $25$ or $49$. The Weil pairing gives cyclicity for the $5$- and $7$-primary parts.
\end{proof}

\begin{remark}[The primes $13$ and $17$ in Li's preprint]\label{rem:Li-two}
Theorem~\ref{thm:two-odd-large} excludes $q=17$, which is left open in \cite[Theorem~1.1(iii), Remark~5.6]{Li}. It also completes the argument for $q=13$. The exclusion list in \cite[Proposition~5.4]{Li} omits $13$, and the subsequent degree argument does not resolve this case: since $13-1=12$, a cyclic quartic point field is still possible, and the quartic Galois classification allows $C_{13}$. Thus the conclusion at $13$ is correct, but the proof has a gap. Lemma~\ref{lem:two-character} supplies the additional ramification argument needed for both primes.
\end{remark}

\subsection{The \texorpdfstring{$3$}{3}-primary part}

\begin{theorem}\label{thm:two-three}
For every quadratic $K$,
\[
 E(F)[3^\infty]\in\{0,C_3,C_9,C_3^2\}.
\]
\end{theorem}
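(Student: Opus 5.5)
We split according to whether $E(F)[3]$ is all of $E[3]$ or has rank at most one.

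\emph{Rank at most one.} If $E(F)[3]$ is cyclic, let $P$ have order $3^n$. As in Lemma~\ref{lem:cyclic-character}, $E(F)[3^n]=\langle P\rangle$ is cyclic, and it is $G_\Q$-stable because $F/\Q$ is Galois. We run the argument of Lemma~\ref{lem:two-character} with $q=3$. The character $\psi:G_\Q\to(\Z/3^n\Z)^\times$ has $2$-group image. Since $(\Z/3^n\Z)^\times\simeq C_2\times C_{3^{n-1}}$, that image has order at most $2$. So $\psi$ is quadratic, and $P$ is defined over a quadratic field. The quadratic twist decomposition then gives a point of order $3^n$ on a rational twist $E^d$ over $\Q$. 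Mazur's theorem yields $n\le2$. This leaves $C_3$ or $C_9$. (If $\psi$ is trivial, the bound comes from Mazur directly.)

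\emph{Full $3$-torsion.} Suppose $E[3]\subset E(F)$. The Weil pairing gives $\mu_3\subset F$. Then $\Q(\sqrt{-3})\subset F$, which is allowed. The image of $G_\Q$ in $\GL_2(\F_3)$ is a $2$-group, since $\Q(E[3])\subset F$. We must show that no point of order $9$ exists over $F$.

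Suppose $Q\in E(F)$ has order $9$, and consider $E(F)[9]$. If it equals $E[9]$, then $\mu_9\subset F$. But $\Q(\zeta_9)/\Q$ has degree $6$, which is not a $2$-power. This contradicts the pro-$2$ condition. Otherwise $E(F)[9]\simeq C_3\oplus C_9$. Its subgroup $3E(F)[9]$ is a $G_\Q$-stable line in $E[3]$, so $E$ has a rational $3$-isogeny. The full group $E(F)[9]$ is $G_\Q$-stable as well. The image of $G_\Q$ acting on it factors through $\Gal(F/\Q)$, a pro-$2$ group, so it is a $2$-subgroup of $\mathrm{Aut}(C_3\oplus C_9)$.

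A $2$-group acting on a $3$-group fixes the Frattini quotient structure semisimply. Concretely, take a Sylow-$2$ argument: the kernel of reduction $\mathrm{Aut}(C_3\oplus C_9)\to\mathrm{Aut}$ of the $3$-torsion is a $3$-group. Hence the image embeds into a $2$-subgroup of $\GL_2(\F_3)$, and it is diagonalizable after a suitable choice of basis. Using Maschke over $\Z/9$, we decompose $E(F)[9]=\langle Q'\rangle\oplus\langle R\rangle$. Here $Q'$ has order $9$, $R$ has order $3$, and both cyclic factors are $G_\Q$-stable. Thus $\langle Q'\rangle$ is a rational cyclic $9$-subgroup with character of order dividing $2$. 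Moreover $\langle R\rangle$ is a further $G_\Q$-stable line independent of $3Q'$. This gives two independent rational $3$-isogenies together with a rational $9$-isogeny, so $E$ has a rational cyclic $27$-isogeny: compose $E\to E/\langle R\rangle$ with the isogeny induced by $\langle Q'\rangle$, and run the standard argument for the isogeny class. Alternatively, after twisting by the quadratic characters, we obtain points of order $9$ and $3$ on twists defined over a biquadratic field. We would then contradict the quartic Galois classification \cite{ChouQuartic}, which excludes $C_3\oplus C_9$ over biquadratic fields for rational curves.

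\emph{Main obstacle.} The hard part is making this last step airtight. We would first try the quartic classification route. The field generated by $E(F)[9]$ has Galois group over $\Q$ a $2$-subgroup of diagonal matrices, hence of exponent $2$. It is therefore multiquadratic, and we must bound it by a biquadratic field. This holds because the group of diagonal matrices has order at most $4$, so the field is biquadratic and Galois. That excludes $C_3\oplus C_9$ and leaves $C_3^2$.
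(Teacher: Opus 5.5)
Your committed route, the quartic classification, is essentially the paper's proof: the stable line $3T$ in $T=E(F)[9]\simeq C_3\oplus C_9$ places the $2$-group image in the Borel subgroup of $\GL_2(\F_3)$, which is what bounds its order by $4$ (being merely a $2$-subgroup of $\GL_2(\F_3)$ would not), so $T$ would be defined over $\Q(E[3])$, a Galois field of degree $2$ or $4$ where \cite{Najman} and \cite{ChouQuartic} exclude it; and your twist-plus-Mazur treatment of the cyclic case is an equally valid substitute for the paper's appeal to the quadratic classification. Drop the $27$-isogeny digression, since $27$ is on the list \eqref{eq:isogeny-list} (realized by the type B curves of Table~\ref{tab:cm-three}), and the type A curves there have precisely your configuration of a rational cyclic $9$-subgroup and an independent rational $3$-subgroup, so that route gives no contradiction without further input such as the quadratic characters.
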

\begin{proof}
First, $\mu_9\not\subset F$, since its ramification index at $3$ over $\Q$ is $6$, contrary to \eqref{eq:inertia-bound}. Thus the smaller invariant factor of any finite $3$-primary subgroup of $E(F)$ has order at most $3$.

If $E(F)[3]$ has rank at most one, a point $P$ of order $3^n$ generates a $G_\Q$-stable cyclic subgroup. The $2$-Sylow subgroup of $(\Z/3^n\Z)^\times$ has order $2$, so $\Q(P)$ has degree at most $2$. The quadratic-field torsion classification \cite{Najman} bounds $3^n$ by $9$.

Now suppose $E[3]\subset E(F)$, and assume for a contradiction that a point of order $9$ exists. The finite $G_\Q$-stable group
\[
 T=E(F)[9]
\]
is $C_3\oplus C_9$, because full $9$-torsion was excluded. Its characteristic subgroup $3T$ is a $G_\Q$-stable line in $E[3]$. Thus the mod-$3$ image is contained in a Borel subgroup of $\GL_2(\F_3)$, of order $12$. It is also a $2$-group, so
\[
 D=\Q(E[3])\subset F,\qquad [D:\Q]\leq4.
\]
The field $D$ is Galois over $\Q$. Since $D(E[9])/D$ has $3$-power degree and $F/D$ is pro-$2$, the subgroup $C_3\oplus C_9$ would already be defined over $D$. This is excluded over quadratic fields by \cite{Najman} and over quartic Galois fields by \cite[Theorem~1.2]{ChouQuartic}. This contradiction excludes points of order $9$ and hence all higher $3$-power torsion in the rank-two case.
\end{proof}

The group $C_3^2$ occurs for $E=[0,0,1,0,0]$ and $K=\Q(\sqrt{-3})$, since $E[3]$ is already defined over $K$.

\begin{corollary}[Finite control of the odd part]\label{cor:two-odd-control}
For every subfield $K\subset L\subset F$,
\[
 E(L)_{\tors}^{\odd}
 =E(L\cap K(E[105]))_{\tors}^{\odd}.
\]
\end{corollary}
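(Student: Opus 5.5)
We prove the statement one prime at a time, applying Lemma~\ref{lem:coprime} with $p=2$. By Theorems~\ref{thm:two-odd-large} and~\ref{thm:two-three}, the odd torsion of $E(F)$, and so of $E(L)$ for every $K\subset L\subset F$, is supported at the primes $3,5,7$. For $q\ge 11$ there is nothing to prove, since $E(L)[q^\infty]\subset E(F)[q^\infty]=0$.

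\emph{The primes $5$ and $7$.} Take $q\in\{5,7\}$. Here $E(F)[q^\infty]$ is $0$ or $C_q$, so it is killed by $q$. Hence $E(L)[q^\infty]=E(L)[q]$. Any point of $E(L)[q]$ lies in $K(E[q])\subset K(E[105])$, so its coordinates lie in $L\cap K(E[105])$. The reverse inclusion is trivial.

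\emph{The prime $3$.} By Theorem~\ref{thm:two-three}, $E(F)[3^\infty]$ lies in $E[9]$. If some point has order $9$, the group is $C_9$, and this case needs more care. Our first step is Lemma~\ref{lem:coprime} itself. It applies because $L/K$ is Galois and pro-$2$: it is a subextension of the abelian extension $F/K$. The lemma gives $E(L)[3^\infty]=E(L\cap K(E[3]))[3^\infty]$, and $K(E[3])\subset K(E[105])$. This handles the $3$-part without any exponent bound, because the lemma already places all $3$-power points of $E(L)$, including those of order $9$, over $L\cap K(E[3])$.

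The same lemma also covers $5$ and $7$ directly, so no separate exponent argument is really needed. We then assemble the pieces. Each primary component of $E(L)^{\odd}_{\tors}$ is defined over $L\cap K(E[q])\subset L\cap K(E[105])$. The odd torsion of $E(L)$ is therefore contained in $E(L\cap K(E[105]))$, and the reverse inclusion holds because $L\cap K(E[105])\subset L$.

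The only step needing care is to confirm that Lemma~\ref{lem:coprime} is applied to $F/K$ with $F$ Galois and pro-$2$ over $K$. The lemma is stated for $F$, and its proof only uses that $DL/D$ is pro-$2$, which holds for any subfield $L$ of $F$. We then intersect with $L$ as in the last line of that proof. The vanishing for $q\ge 11$ is what justifies restricting to the modulus $105=3\cdot5\cdot7$.
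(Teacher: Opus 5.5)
Your proposal is correct and is essentially the paper's proof: Theorems~\ref{thm:two-odd-large} and~\ref{thm:two-three} reduce the problem to $q\in\{3,5,7\}$, and Lemma~\ref{lem:coprime}, applied to the abelian pro-$2$ extension $L/K$, places each $q$-primary part, including any points of order $9$, over $L\cap K(E[q])\subset L\cap K(E[105])$. Your separate exponent argument for $5$ and $7$ is harmless but unnecessary, as you note yourself.
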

\begin{proof}
Theorems~\ref{thm:two-odd-large} and~\ref{thm:two-three} leave only the primes $3,5,7$. Apply Lemma~\ref{lem:coprime} to each. The same lemma places every $9$-torsion point over $L$ in $K(E[3])$.
\end{proof}

\subsection{Non-CM \texorpdfstring{$2$}{2}-primary torsion}

\begin{theorem}\label{thm:noncm-two-bound}
Let $E/\Q$ be non-CM and let $K$ be quadratic. For every abelian extension $A/K$,
\[
 E(A)[2^\infty]\subset E[2^{12}].
\]
In particular, for $K\subset L\subset\widetilde K_2$,
\[
 E(L)[2^\infty]
 =E(L\cap K(E[2^{12}]))[2^{12}].
\]
\end{theorem}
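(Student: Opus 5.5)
The plan mirrors the proof of Theorem~\ref{thm:noncm-three-bound}. We reduce to Lemma~\ref{lem:commutator} with $q=2$. It suffices to show that the image of $G_K$ on $T_2(E)$ contains the two elementary unipotents $U,V$ with $t=2^a$ and $a\leq6$. Then any abelian $A/K$ satisfies $E(A)[2^\infty]\subset E[2^{2a}]\subset E[2^{12}]$. The second displayed equality follows as before. Since $\widetilde K_2/K$ is abelian, every $2$-power torsion point over $L$ has order dividing $2^{12}$. It is then defined over $L\cap K(E[2^{12}])$, by intersecting coordinate fields.

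The key input is the classification of $2$-adic images of non-CM rational elliptic curves (Rouse--Zureick-Brown, and the uniform statement in \cite{RSZB}). I would use it in the form that, for non-CM $E/\Q$, the $2$-adic image $G=\rho_{E,2^\infty}(G_\Q)$ contains the full principal congruence subgroup $\Gamma(2^{b})=I+2^{b}M_2(\Z_2)$ with $b\leq5$. The level of every non-CM $2$-adic image is at most $32$, and I will take this as the known bound. This gives $U,V$ with $t=2^5$ inside $G$ itself.

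Next we pass to $G_K$. Its image $H$ has index at most $2$ in $G$, so $H\cap\Gamma(2^5)$ has index at most $2$ in $\Gamma(2^5)$. Squaring stays inside this subgroup, because any element of an index-$2$ subgroup's complement squares into the subgroup. Hence $U^2$ and $V^2$, the unipotents with $t=2^6$, lie in $H$. Lemma~\ref{lem:commutator} with $a=6$ then gives exactly $2^{12}$. Lemma~\ref{lem:commutator} only needs the image of $G_K$ itself, not of $G_A$, so no further descent is needed.

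The main obstacle is pinning down the exact level input. If the uniform bound available is only that the index divides $96$, or that the level divides $2^5$ for non-CM curves, one must verify that $\Gamma(32)\subset G$. An open subgroup of level $32$ does contain $\Gamma(32)$, so this check is in fact immediate from the definition of level. The exponent $12=2(5+1)$ then shows precisely where the quadratic base change costs one extra factor of $2$. If instead the classification only gives level $2^6$ for some exceptional images, I would handle the base change more carefully. In that case I would show $\Gamma(2^6)\cap H$ still contains the unipotents with $t=2^6$, by using the fact that $\Gamma(2^{b+1})=\Gamma(2^b)^2$ for $b\geq2$.
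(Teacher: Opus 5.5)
Your proposal is correct and is essentially the paper's proof: the Rouse--Zureick-Brown result that the non-CM $2$-adic image over $\Q$ contains $\Gamma(32)$, the index-$2$ squaring step that places $I+64e_{12}$ and $I+64e_{21}$ in the image of $G_K$, and Lemma~\ref{lem:commutator} with $t=64$ give $E(A)[2^\infty]\subset E[2^{12}]$; the division-field equality then follows by intersecting coordinate fields. Your closing contingency is not needed, and it would not work as stated: from level $2^6$, the index-$2$ descent yields only $\Gamma(2^7)$, not unipotents with $t=2^6$.
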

\begin{proof}
By \cite[Corollary~1.3]{RZB}, the $2$-adic image over $\Q$ contains the principal congruence subgroup of level $32$. In particular it contains $I+32e_{12}$ and $I+32e_{21}$. The image of $G_K$ is normal of index at most $2$, so it contains their squares
\[
 I+64e_{12},\qquad I+64e_{21}.
\]
Lemma~\ref{lem:commutator}, with $t=64$, gives the assertion. The division-field equality follows by intersecting coordinate fields.
\end{proof}

Together with Corollary~\ref{cor:two-odd-control}, this theorem reduces the torsion of a non-CM curve over $\widetilde K_2$ to finite division fields.

\subsection{Infinite primary torsion and CM curves}

\begin{proposition}[Necessary conditions for infinite primary torsion]\label{prop:infinite-necessary}
Let $p=2$ or $3$. If $E(\M)[p^\infty]$ is infinite, then
\[
 E[p^\infty]\subset E(\M).
\]
Moreover, $E$ is CM, its CM field is $K$, and it has good reduction over $K$ outside $p$. For the three pairs in Lemma~\ref{lem:ray-compositum}, these latter conditions are also sufficient.
\end{proposition}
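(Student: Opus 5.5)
We first show that infinite $E(\M)[p^\infty]$ forces full $p$-adic torsion over $\M$. If $E(\M)[p]$ has rank at most one, the argument of Lemma~\ref{lem:cyclic-character} and Theorem~\ref{thm:noncm-three-bound} applies. A point of order $p^n$ generates the whole $p^n$-torsion over $\M$, and this is a $G_\Q$-stable cyclic subgroup, since $\M/\Q$ is Galois. The isogeny list \eqref{eq:isogeny-list} then bounds $n$, so $E(\M)[p^\infty]$ would be finite. Hence the rank is two. Write $E(\M)[p^\infty]\simeq(\Q_p/\Z_p)^a\oplus(\text{finite})$ with $a\geq1$.

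If $a=1$, the divisible part is a $G_\Q$-stable rank-one $\Z_p$-line in $V_p(E)$. Its intersection with $E[p^n]$ gives rational cyclic $p^n$-isogenies for every $n$, which contradicts \eqref{eq:isogeny-list}. So $a=2$, and $E[p^\infty]\subset E(\M)$.

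Next we show $E$ is CM. Since $\Gal(\M/K)$ is abelian, the image of $G_K$ on $T_p(E)$ is abelian. For non-CM $E$ this contradicts the open-image results invoked in Theorems~\ref{thm:noncm-three-bound} and~\ref{thm:noncm-two-bound}. More simply, the image of $G_K$ has index at most $2$ in that of $G_\Q$, so it contains the elements $U,V$ of Lemma~\ref{lem:commutator}. That lemma bounds the $p$-primary torsion over an abelian extension, contradicting $E[p^\infty]\subset E(\M)$. Hence $E$ has CM, say by an order in a field $K'$.

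To see $K'=K$, suppose $K'\neq K$. The $p$-adic image of $G_{K'}$ is open in the Cartan $(\mathcal O_{K'}\otimes\Z_p)^\times$, while $G_{KK'}$ has abelian image. The key point is that the image of $G_\Q$ contains an element acting by $-1$ on the Cartan normalizer (conjugation by an element not in $G_{K'}$). If $K'\neq K$, some $\sigma\in G_K\setminus G_{K'}$ exists; it normalizes but does not centralize an open subgroup of the Cartan image coming from $G_{KK'}$, so the image of $G_K$ is nonabelian. This contradicts $E[p^\infty]\subset E(\M)$ with $\Gal(\M/K)$ abelian. I expect this to be the main obstacle: one must make sure the Cartan image restricted to $G_{KK'}$ is still nonscalar, which holds because it is open in a rank-two torus.

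For good reduction outside $p$, note that $\M/K$ is unramified outside $p$. So $G_K$ acts on $T_p(E)$ unramified at every prime $\lambda\nmid p$. By Néron--Ogg--Shafarevich \cite{Silverman}, $E$ has good reduction at $\lambda$ over $K$. Sufficiency for the three pairs of Lemma~\ref{lem:ray-compositum} is exactly Corollary~\ref{cor:cm-sufficient}.
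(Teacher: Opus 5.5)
Your proposal is correct and follows essentially the paper's route. You force $E[p^\infty]\subset E(\M)$ from the isogeny list \eqref{eq:isogeny-list}, which you do via the $G_\Q$-stable divisible part of $E(\M)[p^\infty]$ where the paper uses the characteristic cyclic subgroup $p^{a_n}T_n$. You then exclude non-CM curves with the bounds of Theorems~\ref{thm:noncm-three-bound} and~\ref{thm:noncm-two-bound}, identify the CM field by the same Cartan-normalizer noncommutativity argument, and conclude with N\'eron--Ogg--Shafarevich and Corollary~\ref{cor:cm-sufficient}. The phrase ``acting by $-1$ on the Cartan normalizer'' is loose; it should say that such an element acts on the Cartan by the nontrivial automorphism of $K'\otimes\Q_p$, which is what your next sentence actually uses.
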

\begin{proof}
Write
\[
 T_n=E(\M)[p^n]\simeq C_{p^{a_n}}\oplus C_{p^{b_n}},
 \qquad 0\leq a_n\leq b_n.
\]
The characteristic cyclic subgroup $p^{a_n}T_n$ is $G_\Q$-stable, and gives a rational cyclic isogeny of degree $p^{b_n-a_n}$. By \eqref{eq:isogeny-list}, $b_n-a_n\leq4$ if $p=2$ and $b_n-a_n\leq3$ if $p=3$. Infinite primary torsion forces $b_n$ to be unbounded, hence $a_n$ is unbounded. A subgroup with both invariant factors divisible by $p^r$ contains $E[p^r]$. Since $r$ is arbitrary, $E[p^\infty]\subset E(\M)$.

Theorems~\ref{thm:noncm-three-bound} and~\ref{thm:noncm-two-bound} exclude non-CM curves. Let $k$ be the CM field. The $p$-adic image over $k$ is open in its Cartan subgroup, while elements not fixing $k$ act on that Cartan by the nontrivial quadratic automorphism; see \cite[Proposition~12.3]{RSZB}. If $k\neq K$, the image of $G_K$ contains both an open subgroup of the Cartan and such an element, and cannot be abelian. But $E[p^\infty]\subset E(\M)$ forces this image to factor through the abelian group $\Gal(\M/K)$. Therefore $k=K$.

Since the full $p$-adic representation is unramified outside $p$, the N\'eron--Ogg--Shafarevich criterion gives good reduction there. The sufficiency assertion is Corollary~\ref{cor:cm-sufficient}.
\end{proof}

\begin{corollary}[CM examples at $p=2$]\label{cor:sixteen}
For each pair $(E,K)$ in Table~\ref{tab:cm-two},
\[
 E(\widetilde K_2)[2^\infty]
 =E[2^\infty]\simeq(\Q_2/\Z_2)^2.
\]
Table~\ref{tab:cm-two} gives sixteen such pairs.
\end{corollary}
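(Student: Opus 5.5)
The plan is to reduce the statement to Corollary~\ref{cor:cm-sufficient} applied with $p=2$. That corollary needs two facts: $K$ must be $\Q(i)$ or $\Q(\sqrt{-2})$, and $E$ must have CM by an order in $K$ with good reduction over $K$ outside $2$. The converse equality $E(\widetilde K_2)[2^\infty]=E[2^\infty]$ is then trivial, and $E[2^\infty]\simeq(\Q_2/\Z_2)^2$ holds for any elliptic curve in characteristic zero. So the work is to verify these hypotheses for each of the sixteen pairs in Table~\ref{tab:cm-two}.

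First I would identify the sixteen curves. The rational CM $j$-invariants with CM field $\Q(i)$ are $1728$ and $287496$, the latter for the order $\Z[2i]$. Those with CM field $\Q(\sqrt{-2})$ are only $8000$. I would list the twists of these that have conductor a power of $2$, taking from the tables the conductors $32,64,256$. For $j=1728$ this gives $y^2=x^3\pm x$, $y^2=x^3\pm4x$, $y^2=x^3\pm2x$, $y^2=x^3\pm8x$, and so on. For each curve I would record that its endomorphism ring is an order in $K$. That is the standard identification of the CM order from $j$: the order $\Z[i]$ or $\Z[2i]$ for $j=1728,287496$, and $\Z[\sqrt{-2}]$ for $j=8000$.

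Second, I would check good reduction over $K$ outside $2$. The cleanest route is to note that the conductor over $\Q$ is a power of $2$, so $E$ already has good reduction over $\Q$ at every odd prime. Good reduction is preserved under base change, so the hypothesis of Corollary~\ref{cor:cm-sufficient} follows directly. Alternatively, I would exhibit a model whose discriminant is $\pm2^k$, such as $\Delta(y^2=x^3+ax)=-64a^3$ with $a\in\{\pm1,\pm2,\pm4,\pm8\}$. Then Lemma~\ref{lem:ray-compositum} for $(\Q(i),2)$ and $(\Q(\sqrt{-2}),2)$ feeds into Corollary~\ref{cor:cm-sufficient}, and this gives the containment.

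The main obstacle is bookkeeping rather than theory. I must make sure every listed pair matches the CM field with $K$; the curves with $j=1728$ must not be paired with $\Q(\sqrt{-2})$, for instance. I must also make sure each model really has conductor supported at $2$, since twists by odd $d$ would introduce bad reduction at primes dividing $d$. I also need to confirm that the count is exactly sixteen. Proposition~\ref{prop:infinite-necessary} gives the necessary conditions, so no curve outside the list can sneak in unnoticed. My plan for the count is to enumerate the $2$-power-conductor twists in each isogeny class through the standard classification and tally them per field.
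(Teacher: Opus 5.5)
Your proposal is correct and follows the paper's proof. You identify the CM orders (discriminants $-4,-16,-8$, so the CM field is $K$), note that each listed model has discriminant $\pm$ a power of $2$ (equivalently, conductor a power of $2$), deduce good reduction outside $2$ over $\Q$ and hence over $K$, and then apply Corollary~\ref{cor:cm-sufficient} via Lemma~\ref{lem:ray-compositum}. Your enumeration and the check that the count is \emph{exactly} sixteen are not needed: the statement only counts the entries of Table~\ref{tab:cm-two}, and the paper makes no completeness claim (cf.\ Section~\ref{sec:scope}).
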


\begin{table}[htbp]
\centering
\caption{CM pairs with full $2$-power torsion over $\widetilde K_2$.}\label{tab:cm-two}
\begin{tabular}{llrc}
\toprule
$K$ & Coefficients & $j$ & Number\\
\midrule
$\Q(i)$ & $[0,0,0,a,0]$, $a\in\{\pm1,\pm2,\pm4,\pm8\}$ & $1728$ & $8$\\
$\Q(i)$ & $[0,0,0,-11,\pm14]$ & $287496$ & $2$\\
$\Q(i)$ & $[0,0,0,-44,\pm112]$ & $287496$ & $2$\\
$\Q(\sqrt{-2})$ & $[0,\varepsilon,0,-13,-21\varepsilon]$, $\varepsilon=\pm1$ & $8000$ & $2$\\
$\Q(\sqrt{-2})$ & $[0,\varepsilon,0,-3,\varepsilon]$, $\varepsilon=\pm1$ & $8000$ & $2$\\
\bottomrule
\end{tabular}
\end{table}

\begin{proof}
The CM discriminants corresponding to these three $j$-invariants are $-4$, $-16$, and $-8$. The displayed integral equations have discriminants that are powers of $2$ up to sign. Thus they have good reduction outside $2$ over $\Q$, and hence over $K$. Apply Corollary~\ref{cor:cm-sufficient}.
\end{proof}

\subsection{Full finite torsion and bad reduction}

\begin{proposition}\label{prop:full-four-family}
There are infinitely many rational $j$-invariants of non-CM elliptic curves $E/\Q$ with
\[
 E[4]\subset E(\Q(i)).
\]
\end{proposition}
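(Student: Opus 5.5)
The plan is to write down an explicit one-parameter family over $\Q$ whose full $4$-division field is $\Q(i)$, and then argue as in Theorem~\ref{thm:hesse-family} that infinitely many members are non-CM. Full $4$-torsion over $\Q(i)$ forces $\Q(E[2])\subset\Q(i)$ with $\Gal$ acting through a group of order at most $2$. The Weil pairing then gives $\mu_4\subset\Q(E[4])$, which is consistent. So the mod-$4$ image of $G_\Q$ must be a subgroup of order $2$ of $\GL_2(\Z/4\Z)$ with determinant the character of $\Q(i)$. The natural choice is complex conjugation acting as $\operatorname{diag}(1,-1)$ on a basis of $E[4]$.

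Concretely, we start from curves with full rational $2$-torsion and a rational $4$-torsion point, that is, the modular curve $X(2,4)$ family. We then impose that a second independent point of order $4$ is defined over $\Q(i)$. The first attempt will be the family
\[
 E_s:\ y^2=x(x-a^2)(x-b^2)
\]
with $a,b$ built from a parameter. For the model $y^2=x(x-r)(x-s)$, a point $P$ with $2P=(0,0)$ exists over a field $F$ exactly when $-r$ and $-s$ are squares in $F$. The analogous criterion holds for the other two $2$-torsion points. Over $\Q(i)$ every rational square times $-1$ is a square. Hence it suffices to arrange that $r$, $s$ and $r-s$ (up to sign) are all rational squares, which puts all halvings of $E[2]$ over $\Q(i)$. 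This reduces to choosing $r=u^2$, $s=v^2$ with $u^2-v^2=\pm w^2$, a Pythagorean triple. So we take $(u,v,w)=(m^2+n^2,2mn,m^2-n^2)$, or more simply the parametrisation $u=1+t^2$, $v=2t$.

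We then verify, via the halving criterion applied to each of the three $2$-torsion points, that all points of $\tfrac12E[2]$ lie over $\Q(i)$. That gives $E[4]\subset E(\Q(i))$, and we write down the generators explicitly as in Theorem~\ref{thm:hesse-family}.

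Next we compute $j(E_t)$ as a nonconstant rational function of $t$, of the form $256(x^2-x+1)^3/(x^2(x-1)^2)$ with $x=s/r=4t^2/(1+t^2)^2$. Nonconstancy gives finite fibres and hence infinitely many $j$-invariants. For non-CM, we specialise to a congruence class of integers $t$ for which some odd prime divides the denominator of $j$ to a positive power; a suitable prime dividing $t$ or $1-t^2$ should work. Integrality of rational CM $j$-invariants then excludes CM, exactly as in Theorem~\ref{thm:hesse-family}.

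The main obstacle is this last step: checking that the denominator of $j$ genuinely survives cancellation with the numerator for infinitely many $t$. The alternative is to exhibit that only finitely many $t$ land on the thirteen rational CM $j$-invariants. Since each fibre of the nonconstant map $j(E_t)$ is finite, the second route actually suffices immediately, and I would use it as the fallback.
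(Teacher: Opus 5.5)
Your proposal is correct and is essentially the paper's proof. The paper uses the same curves $y^2=x(x-a^2)(x-b^2)$ with $a^2+c^2=b^2$, and your $u=1+t^2$, $v=2t$ is exactly its choice $b=s^2+1$, $a=2s$. It then applies the same halving criterion (all of $\pm a^2,\pm b^2,\pm c^2$ are squares in $\Q(i)$) and the same conclusion from the nonconstancy of $j$ together with the finiteness of the rational CM $j$-invariants. So your \emph{fallback} is the paper's main argument, and the denominator analysis is unnecessary.

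The one detail you should state is the exclusion of $t\in\{0,\pm1\}$, where the cubic has a repeated root. These are precisely the rational poles of $j(E_t)$, so the exclusion is harmless.
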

\begin{proof}
Choose nonzero rational $a,b,c$ with $a^2+c^2=b^2$, and set
\[
 E_{a,b}:y^2=x(x-a^2)(x-b^2).
\]
Every difference between two roots of the cubic is a square in $\Q(i)$: the differences are $\pm a^2,\pm b^2,\pm c^2$. The standard halving formulas for full rational $2$-torsion therefore give $E_{a,b}[4]\subset E_{a,b}(\Q(i))$. Explicitly, halving $(e_i,0)$ requires the square roots of $e_i-e_j$ and $e_i-e_k$, and these all lie in $\Q(i)$.

Take $a=2s$, $c=s^2-1$, $b=s^2+1$ with rational $s\notin\{0,1,-1\}$. The Legendre parameter $a^2/b^2$ and the resulting $j$-invariant vary nontrivially with $s$. Only finitely many rational $j$-invariants are CM, so infinitely many non-CM values occur.
\end{proof}

For example, $a=3,b=5,c=4$ gives
\[
 E:y^2=x(x-9)(x-25),\qquad j(E)=\frac{111284641}{50625}.
\]
Its minimal equation is $[1,1,1,-10,-10]$, of conductor $15$, and $E(\Q(i))_{\tors}=C_4^2$. The negative valuations of $j$ at $3$ and $5$ show that $E$ has bad reduction at both primes after base change to $\Q(i)$, although $E[4]$ is defined over that field.

\begin{remark}
The N\'eron--Ogg--Shafarevich criterion requires the entire $\ell$-adic Tate module to be unramified, with $\ell$ different from the residue characteristic. Proposition~\ref{prop:infinite-necessary} meets this hypothesis by first proving containment of $E[p^\infty]$. A single finite module does not suffice: the example above and Example~\ref{ex:noncm-three} have unramified $E[4]$ or $E[3]$ at primes of bad reduction.
\end{remark}

\section{Further questions}\label{sec:scope}

The division-field bounds in Theorems~\ref{thm:noncm-three-bound} and~\ref{thm:noncm-two-bound} leave two finite classification problems: the $3$-primary torsion of non-CM curves over $\widetilde{\Q(\sqrt{-3})}_3$, and the $2$-primary torsion over $\widetilde K_2$. In the latter case, one must also determine which primary groups can occur together. Computing the relevant Galois invariants could sharpen the bounds and settle these questions for individual curves.

The CM tables treat the curves listed, without classifying all quadratic fields and twists. A complete description of infinite primary torsion would require determining all rational CM curves with good reduction outside $p$ over their CM field and applying Proposition~\ref{prop:infinite-necessary}.

\section*{Declarations}
\paragraph{Funding}
This research was supported by the National Natural Science Foundation of China (NSFC), Grant Nos.~12231009 and~11971224.
\paragraph{Conflict of interest}
The authors declare that they have no conflict of interest.
\paragraph{Data availability}
The curve models and defining fields used in the computations are included in the article.

\end{document}